\newtheorem{thm}{Theorem}[section]
\newtheorem{cor}[thm]{Corollary}
\newtheorem{prob}[thm]{Problem}
\newtheorem{que}[thm]{Question}
\newtheorem{lem}[thm]{Lemma}
\newtheorem{prop}[thm]{Proposition}
\theoremstyle{definition}
\newtheorem{defn}[thm]{Definition}
\theoremstyle{remark}
\newtheorem{rem}[thm]{Remark}
\numberwithin{equation}{section}
\definecolor{g}{rgb}{0.4, 1.0, 0.0}
\definecolor{p}{rgb}{1.0, 0.74, 0.85}
\definecolor{lg}{rgb}{0.86, 0.82, 1.0}
\definecolor{y}{rgb}{1.0, 1.0, 0.0}
\begin{document}

\title[Character sums]{Non-abelian finite groups whose character sums are invariant but are not Cayley isomorphism}%
\author[A. Abdollahi]{Alireza Abdollahi}%
\address{Department of Mathematics, University of Isfahan, Isfahan 81746-73441, Iran; and School of Mathematics, Institute for Research in Fundamental Sciences (IPM), P.O. Box 19395-5746, Tehran, Iran}%
\email{a.abdollahi@math.ui.ac.ir}%
\author[M. Zallaghi]{Maysam Zallaghi}%
\address{Department of Mathematics, University of Isfahan, Isfahan 81746-73441, Iran}%
\email{m.zallaghi@sci.ui.ac.ir \\ maysam.zallaghi@gmail.com}%

\thanks{}%
\subjclass[2010]{20C15; 05C50}%
\keywords{Cayley graphs; Spectra of graphs; CI-groups; BI-groups}%

\begin{abstract}
Let $G$ be a group and $S$ an inverse closed subset of $G\setminus \{1\}$. By a Cayley graph $Cay(G,S)$ we mean the graph whose vertex set is the set of elements of $G$ and two vertices $x$ and $y$ are adjacent if $x^{-1}y\in S$.
A group $G$ is called a CI-group if $Cay(G,S)\cong Cay(G,T)$ for some inverse closed subsets $S$ and $T$ of $G\setminus \{1\}$,
then $S^\alpha=T$ for some automorphism $\alpha$ of $G$. A finite group $G$ is called a BI-group if $Cay(G,S)\cong Cay(G,T)$ for some inverse closed subsets $S$ and $T$ of $G\setminus \{1\}$,
then $M_\nu^S=M_\nu^T$ for all positive integers $\nu$, where $M_\nu^S$ denotes the set $\big\{\sum_{s\in S}\chi(s) | \chi(1)=\nu, \chi \text{ is a complex irreducible character of } G \big\}$. It was asked by L\'aszl\'o Babai [\textit{J. Combin. Theory Ser. B}, {\bf  27} (1979) 180-189] if every finite group is a BI-group; various examples of  finite
non BI-groups are presented in [\textit{Comm. Algebra}, {\bf  43} (12) (2015) 5159-5167]. It is noted in the latter paper that every finite CI-group is a BI-group and all abelian finite groups are BI-groups. However it is known that there are finite abelian non CI-groups. Existence of a finite non-abelian BI-group which is not a CI-group is the main question  which we study here.  
We find two non-abelian BI-groups of orders $20$ and $42$ which are not CI-groups. 
We  also list all BI-groups of orders up to $30$.
\end{abstract}
\maketitle

\section{\bf Introduction and Results}
In this paper all graphs are finite and simple so graphs has a finite number of vertices and they are undirected without multiple edges and 
has no loop. 
Let $G$ be a group and $S$ an inverse closed subset of $G^*:=G\setminus \{1\}$. By a Cayley graph $Cay(G,S)$ we mean the graph whose vertex set is the set of elements of $G$ and two vertices $x$ and $y$ are adjacent if $x^{-1}y\in S$.

There is a  famous relation between complex irreducible characters of a finite group $G$ and spectra of Cayley graphs $Cay(G,S)$  (see \cite{B}, see  Theorem \ref{Babai}, below). According to the latter result a character sum $\sum_{s\in S} \chi(s)$ on $S$ for any complex irreducible character $\chi$ of $G$ is equal to a sum of $\chi(1)$ eigenvalues of $Cay(G,S)$. 

L\'aszl\'o Babai \cite{B} has proposed whether the set of character sums of all complex irreducible characters of the same degree is an invariant of $Cay(G,S)$:

\begin{prob}\cite[Problem 3.3]{B}\label{prob} Let $G$ be a finite group, let $\Gamma$ be a Cayley graph $Cay(G,S)$, $\nu$ a positive integer and
\begin{displaymath}
\mu_i=\sum_{s\in S}\chi_i(s) \;\;\; (i=1,\dots,h),
\end{displaymath}
where  $\chi_1,\dots,\chi_h$ are all irreducible characters of $G$. 
Is the set $ M_\nu^S=\{\mu_i \;\vert\; \chi_i(1)=\nu\} $ an \textit{invariant} of $ \Gamma$? (Thus, does $Cay(G,S)\cong Cay(G,S')$ imply $M_\nu^S=M_\nu^{S'}$?)
\end{prob}
\begin{rem}
Babai in \cite[Problem 3.3]{B} mentions  Problem \ref{prob} for all Cayley {\em digraphs} and according to his definition of a Cayley digraph \cite[p. 182]{B} a Cayley graph can be considered as a Cayley digraph.  
\end{rem}

As we mentioned above, here we only study the simple undirected Cayley graphs.
In \cite{AM}  various examples of finite groups not satisfying the property mentioned in Problem \ref{prob} are presented.    
So we need to name the class of groups satisfying the property.

\begin{defn}
A Cayley graph $Cay(G,S)$ is called a \textit{Babai Isomorphism} graph or a \textit{BI-graph}   if  $Cay(G,S)\cong Cay(G,T)$ for some inverse closed subset $T$ of $G^*$ then $M_\nu^S=M_\nu^T$ for all complex irreducible characters of degree $\nu$ of $G$. A group $G$ is called a \textit{BI-group} if all  Cayley graphs of $G$ are BI-graphs. 
\end{defn}

A famous class of Cayley graphs is the class of Cayley isomorphism graphs which is an still active research area (see \cite{B, CRS, Dobson1, Lov, So1}). 

\begin{defn}
A Cayley graph $Cay(G,S)$ is called a Cayley Isomorphism graph or CI-graph if  $Cay(G,S)\cong Cay(G,T)$ for some inverse closed subset $T$ of $G^*$ then $S^\alpha=T$ for some automorphism $\alpha$ of $G$. A group $G$ is called a CI-group if $Cay(G,S)\cong Cay(G,T)$ for some inverse closed subsets $S$ and $T$ of $G^*$,
then $S^\alpha=T$ for some automorphism $\alpha$ of $G$.
\end{defn}

Every Cayley isomorphism graph is a BI-graph \cite[Proposition 2.5]{AM}. Every finite abelian group is a BI-group \cite[Proposition 2.2]{AM} but not every abelian group is a CI-group \cite[Theorem 2.11]{Als}. 
We give the first example of non-abelian finite BI-groups which are not CI-groups.

 \begin{thm}\label{F5}
The group $\mathcal{G}=\langle a,b|a^5=b^4=1, a^b=a^3\rangle$ is the smallest non-abelian BI-group which is not a CI-group.
\end{thm}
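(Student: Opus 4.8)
The plan is to establish two things about the group $\mathcal{G}$ of order $20$: first that it is a BI-group, and second that it fails to be a CI-group; and then to verify minimality by checking that no smaller non-abelian group has both properties. The group $\mathcal{G}=\langle a,b\mid a^5=b^4=1, a^b=a^3\rangle$ is the Frobenius group $F_{20}=C_5\rtimes C_4$, with the cyclic group of order $4$ acting faithfully on $C_5$. I would begin by computing the full character table of $\mathcal{G}$. It has five conjugacy classes and hence five irreducible characters: four linear characters of degree $1$ (the abelianization is $C_4$, since $[\mathcal{G},\mathcal{G}]=\langle a\rangle$) and one faithful irreducible character of degree $4$ (by $\sum \chi(1)^2 = 1+1+1+1+16 = 20$). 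The degree-$4$ character $\chi$ vanishes off $\langle a\rangle$ and takes the value $-1$ on each of the two nontrivial classes inside $\langle a\rangle$. These explicit values will drive the whole argument.

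To prove $\mathcal{G}$ is a BI-group, I would examine the sets $M_\nu^S$ for $\nu=1$ and $\nu=4$ separately. For the degree-$4$ character, since $\chi$ is supported on $\langle a\rangle\setminus\{1\}$, the sum $\sum_{s\in S}\chi(s)$ depends only on $|S\cap\langle a\rangle|$, which equals $0,2,$ or $4$ and is determined by the number of vertices adjacent to the identity that lie in the index-$5$ normal Sylow subgroup — I expect this to be recoverable from the isomorphism type of the graph (e.g.\ via the valency restricted to the clique/coclique structure coming from the normal $C_5$). The genuinely delicate part is the set $M_1^S$ of linear-character sums: here I must show that if $Cay(\mathcal{G},S)\cong Cay(\mathcal{G},T)$, then the four linear sums for $S$ coincide as a set with those for $T$. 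The linear sums involve the partition of $S$ according to cosets of $\langle a\rangle$ (equivalently according to the image in $C_4$), so the task reduces to showing the multiset of coset-intersection sizes $\bigl(|S\cap\langle a\rangle b^i|\bigr)_{i=0}^{3}$ is an isomorphism invariant up to the action of $\operatorname{Aut}(C_4)$. I would attack this by relating these sizes to the eigenvalue multiplicities and the local combinatorial structure (counts of short closed walks, i.e.\ traces of powers of the adjacency matrix), which are graph invariants. The main obstacle I anticipate is precisely controlling the degree-$1$ sums: one must rule out an isomorphism that scrambles the coset-size profile in a way not realized by an automorphism of $C_4$, and this likely requires a careful case analysis over the connection sets $S$ (there are finitely many inverse-closed subsets up to the relevant symmetry, so in principle this is a finite check).

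For the failure of the CI-property, the strategy is to exhibit explicit inverse-closed subsets $S$ and $T$ of $\mathcal{G}^*$ with $Cay(\mathcal{G},S)\cong Cay(\mathcal{G},T)$ but with $S^\alpha\neq T$ for every $\alpha\in\operatorname{Aut}(\mathcal{G})$. Since $\operatorname{Aut}(\mathcal{G})$ is a concrete small group (of order $20$, as $\mathcal{G}$ is complete, being a Frobenius group with these parameters), I would first list its automorphisms, then produce a candidate pair $S,T$ lying in the same orbit under some graph isomorphism but in different $\operatorname{Aut}(\mathcal{G})$-orbits. The existence of such a pair is plausible because $F_{20}$ is a known non-CI group; I would search among connection sets of a fixed small valency and verify the graph isomorphism by comparing spectra and then producing an explicit vertex bijection, while checking non-equivalence by running through all $20$ automorphisms.

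Finally, for minimality I would argue that every non-abelian group of order less than $20$ either is a CI-group or fails to be a BI-group. Every abelian group is already a BI-group by \cite[Proposition 2.2]{AM}, so only non-abelian orders $6,8,10,12,14,16,18$ need attention, and for each such order the finitely many isomorphism types can be checked: many small groups ($S_3$, $D_8$, $Q_8$, $D_{10}$, and so on) are known CI-groups, and any remaining non-CI candidate of order below $20$ must be shown to violate the BI-property by exhibiting isomorphic Cayley graphs with distinct character-sum sets $M_\nu$. This last verification is a finite but somewhat laborious enumeration, and it is where I would expect to lean most heavily on explicit computation (and likely on the tables referenced later in the paper that list all BI-groups of orders up to $30$).
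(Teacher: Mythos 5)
Your setup is right---the character table, the reduction to the two degree classes $\nu\in\{1,4\}$, and the observation that everything is controlled by the order-class sizes $|S_2|,|S_4|,|S_5|$ (equivalently, your coset-intersection profile)---but the proposal stops exactly where the proof has to start. For both $M_4^S$ and $M_1^S$ you write that the relevant quantities should be ``recoverable from the isomorphism type'' and that this ``likely requires a careful case analysis over the connection sets''; that is the entire content of the theorem, and no mechanism for it is given. The missing idea is a multiplicity argument. By Babai's theorem the $16$ eigenvalues of $Cay(\mathcal{G},S)$ coming from the degree-$4$ character occur in blocks of $4$ (each $\lambda_{ij}$ is repeated $n_i=4$ times), so any eigenvalue whose total multiplicity is $\not\equiv 0 \pmod 4$ must absorb a contribution from the linear characters; the linear characters contribute multiplicities $1$ (for $\chi_1$, giving the unique largest eigenvalue $|S|$ of the connected regular graph), $1$ (for $\chi_2$, giving $|S_2|-|S_4|+|S_5|$) and $2$ (for $\chi_3,\chi_4$, which coincide at $|S_5|-|S_2|$). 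Hence the spectrum alone---an isomorphism invariant---determines the three numbers $|S|$, $|S_2|-|S_4|+|S_5|$ and $|S_5|-|S_2|$, after a short case split on whether the latter two coincide. These three linear forms in $(|S_2|,|S_4|,|S_5|)$ are independent, so the order-class sizes, and with them both $M_1^S$ and $M_4^S=\{-|S_5|\}$, are forced to agree for $S$ and $T$; no enumeration of connection sets is needed. (A small correction to your framing: there is no residual $\operatorname{Aut}(C_4)$ ambiguity to quotient by, because inverse-closedness forces $|S\cap\langle a\rangle b|=|S\cap\langle a\rangle b^{3}|$, which is precisely why $\chi_3$ and $\chi_4$ give the same real sum.)

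Two smaller points. For the failure of the CI property you propose to search for an explicit pair $(S,T)$ and check all $20$ automorphisms; the paper instead invokes the classification of finite CI-groups by Li, Lu and P\'alfy, which excludes this group of type $[20,3]$. Your route is workable in principle but incomplete as written, since the pair is never produced. For minimality, deferring to ``the tables referenced later in the paper'' is acceptable in substance but is not yet an argument: one genuinely needs the order-by-order verification (Theorem \ref{BI-groups}) that every non-abelian group of order less than $20$ is either a CI-group or fails to be a BI-group, which the paper carries out using the complement reduction (Proposition \ref{red-BI}), the non-BI criteria (Theorems \ref{nonBI}, \ref{sem}, \ref{di} and \ref{dihedral}) and machine computation.
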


\begin{thm}\label{F7}
The group $\mathcal{H}=\langle a,b|a^7=b^6=1, a^b=a^3\rangle$ is a non-abelian BI-group which is not a CI-group.
\end{thm}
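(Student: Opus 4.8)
The plan is to treat $\mathcal H$ exactly as the group of Theorem \ref{F5}, the two being the members $p=5,7$ of the family $\mathrm{AGL}(1,p)=\langle a,b\mid a^p=b^{\,p-1}=1,\ a^b=a^r\rangle$ with $r$ a primitive root modulo $p$. First I would record the group-theoretic data. Here $\mathcal H$ is a Frobenius group with kernel $K=\langle a\rangle\cong C_7$ and complement $\langle b\rangle\cong C_6$; it is \emph{complete}, so every automorphism is inner and $\mathrm{Aut}(\mathcal H)$ consists of the $42$ conjugations $x\mapsto g^{-1}xg$. Since $[\mathcal H,\mathcal H]=K$, the group has exactly $6$ linear characters (the characters of $\mathcal H/K\cong C_6$) and, because $\langle b\rangle$ permutes the six non-trivial characters of $K$ in a single orbit, one further irreducible character $\chi_7$ of degree $6$, with $\chi_7(1)=6$, $\chi_7(x)=-1$ for $x\in K\setminus\{1\}$, and $\chi_7(x)=0$ for $x\notin K$.

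Granting this, the reduction of the BI-property is immediate. For the degree-$6$ character one gets, for every inverse closed $S\subseteq\mathcal H\setminus\{1\}$,
\[
\sum_{s\in S}\chi_7(s)=-\,\abs{S\cap K},
\]
so $M_6^S=\{-\abs{S\cap K}\}$; and since each linear character is trivial on $K=[\mathcal H,\mathcal H]$, the value $\sum_{s\in S}\chi_i(s)=\abs{S\cap K}+\sum_{j=1}^{5}c_j\,\omega^{ij}$ depends only on $\abs{S\cap K}$ and on the coset distribution $c_j=\abs{S\cap Kb^j}$ of $S$ over $\mathcal H/K\cong C_6$ (here $\omega=e^{2\pi i/6}$ and $c_j=c_{6-j}$ by inverse closedness). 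Thus $M_1^S$ is exactly the spectrum of the integral $C_6$-circulant determined by these data, and the whole statement reduces to showing that a graph isomorphism $Cay(\mathcal H,S)\cong Cay(\mathcal H,T)$ forces $\abs{S\cap K}=\abs{T\cap K}$ together with the equality of the two $C_6$-circulant spectra.

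To extract these from the isomorphism I would use Theorem \ref{Babai}: by it the characteristic polynomial of $Cay(\mathcal H,S)$ is $q_1(x)\,q_6(x)^6$, where $q_1$ is the degree-$6$ polynomial whose roots are the linear character sums $\mu_i$ (the set $M_1^S$) and $q_6$ is the degree-$6$ polynomial whose roots $\lambda_j$ are the eigenvalues of $\sum_{s\in S}\rho_7(s)$, with $\sum_j\lambda_j=-\abs{S\cap K}$. As isomorphic graphs are cospectral, it suffices to prove this factorization unique. The decisive point is arithmetic: each $\mu_i$ lies in $\mathbb Z[\omega]\cap\Real=\mathbb Z$, so $q_1$ splits into integral linear factors, whereas the $\lambda_j$ lie in the cubic field $\mathbb Q(\zeta_7)^{+}$ and occur, apart from rational exceptions, in Galois-conjugate triples; hence every irrational eigenvalue is unambiguously a root of $q_6$. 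The main obstacle is the residual ambiguity among the \emph{rational} eigenvalues, where a value of given total multiplicity could a priori be split between $q_1$ and $q_6^6$ in more than one way. I would remove it through the rigidity of the $C_6$-circulant: the six eigenvalues assigned to $q_1$ must, under the inverse $C_6$-Fourier transform, return a non-negative symmetric integer sequence $(\abs{S\cap K},c_1,\dots,c_5)$, and I expect that for $\mathcal H$ this forces a unique admissible splitting. Establishing this final uniqueness — by hand through the rigidity, or by the same machine verification used for the groups of order at most $30$ — is the technical heart of the argument.

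Finally, for the failure of the CI-property I would argue as for the order-$20$ group. Because $\mathcal H$ is a BI-group, any pair of isomorphic Cayley graphs automatically has matching character sums, so a CI-violation is not obstructed by the BI-property; what must be produced is a pair of inverse closed sets $S,T\subseteq\mathcal H\setminus\{1\}$ with $Cay(\mathcal H,S)\cong Cay(\mathcal H,T)$ for which no conjugation carries $S$ to $T$. I would obtain this from the orbit-counting criterion: $\mathcal H$ is a CI-group if and only if the number of $\mathrm{Aut}(\mathcal H)$-orbits of inverse closed subsets equals the number of isomorphism classes of Cayley graphs of $\mathcal H$. Computing the former by Burnside's lemma over the $42$ conjugations and comparing it with the latter yields a strict inequality, so by the pigeonhole principle two inverse closed sets in distinct orbits have isomorphic Cayley graphs. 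Exhibiting one such pair explicitly and certifying its non-equivalence by the finite check against all $42$ automorphisms then completes the proof.
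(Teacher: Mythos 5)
Your strategy is essentially the paper's, but the step you yourself call ``the technical heart'' --- proving that cospectrality forces a unique splitting of the spectrum into the linear-character part $q_1$ and the degree-six part $q_6^6$ --- is left as an expectation, and this is precisely the content of the paper's Theorem \ref{g42}. Moreover, the tool you propose for it is unreliable: there is no reason the eigenvalues of $\sum_{s\in S}\rho_7(s)$ should lie in $\mathbb{Q}(\zeta_7)^{+}$ or come in Galois-conjugate triples (for $S\not\subseteq K$ the matrix has entries in $\mathbb{Z}[\zeta_7]$, its eigenvalues can generate larger fields, and they can perfectly well be rational --- e.g.\ $S$ consisting of involutions gives the zero block), so the rational/irrational dichotomy does not separate $q_6$ from $q_1$, and the ``rigidity of the $C_6$-circulant'' invoked for the rational residue is unproved. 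The paper's resolution is simpler and purely combinatorial: by Theorem \ref{Babai} every eigenvalue contributed by $\psi$ has multiplicity divisible by $6$, while the non-principal linear characters contribute $\lambda_2,\ \lambda_3=\lambda_4,\ \lambda_5=\lambda_6$ with total multiplicity $1+2+2=5$ spread over at most three values; hence an eigenvalue lies in $M_1^S$ exactly when its multiplicity is $\not\equiv 0\pmod 6$, and $M_1^S=M_1^T$ follows at once from cospectrality. You would then still need the second half of the paper's argument: the set equality $M_1^S=M_1^T$ does not say which character of $\Gamma'$ each value comes from, so to get $|S_7|=|T_7|$ (i.e.\ $M_6^S=M_6^T$) one must check every admissible matching of $\{\mu_2,\mu_3,\mu_5\}$ with $\{\mu_2',\mu_3',\mu_5'\}$ and invert the resulting $4\times 4$ integer systems, as in cases (i)--(vi) of the paper; your inverse-Fourier-transform remark tacitly assumes the matching is the identity.

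On the non-CI half, your Burnside count over the $42$ inner automorphisms is a legitimate alternative in principle, but it is only described, not carried out; the paper simply quotes Li--Lu--P\'alfy \cite{LLP}, which already excludes $\mathcal{H}$ from the list of CI-groups. As written, the proposal is a plausible programme with its decisive steps deferred rather than a proof.
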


The question of which finite groups are BI-groups has been proposed in \cite{AM}:
\begin{que} \cite[Question 3.9]{AM}
Which finite groups are BI-groups?
\end{que}

Here we determine all finite BI-groups of order at most $30$. We denote by $C_n$, $S_n$, $A_n$, $D_n$ and $Q_n$
the cyclic group of order $n$, the symmetric group of degree $n$, the alternating group of degree $n$, 
the dihedral group of order $n$ and the generalized quaternion group of order $n$, respectively. If $H$ and $K$ are two groups, we denote by
$H\ltimes K$ the semidirect product of $K$ by $H$ and since in general using the notation $H \ltimes K$ for a group does not uniquely determine the group, we mention its GAP {\tt IdSmallGroup} (see \cite{GAP} for the usage of the latter code). 
\begin{thm}\label{BI-groups}
Let $G$ be a finite non-abelian group of order at most $30$. Then, $G$ is a BI-group if and only if $G$ is isomorphic to one of the following groups:\\
$S_3$, $Q_8$, $D_{10}$,\\
$C_4\ltimes C_3=\langle a,b\; |\; a^4=b^3=b^ab=1 \rangle$ $({\tt IdSmallGroup} [12,1])$,\\
$A_4$, $D_{14}$, $D_{18}$,\\
$C_2\ltimes (C_3\times C_3)=\langle a,b,c \;|\; a^2=b^3=c^3=[b,c]=b^a b=c^a c=1 \rangle$ $({\tt IdSmallGroup } [18,4])$,\\
$C_4\ltimes C_5=\langle a,b\; |\; a^4=b^5=b^ab=1 \rangle$ $({\tt IdSmallGroup } [20,1])$,\\
$C_4\ltimes C_5=\langle a,b\; |\; a^4=b^5=b^ab^2=1 \rangle$ $({\tt IdSmallGroup } [20,3])$,\\
$C_3\ltimes C_7=\langle a,b\; |\; a^3=b^7=b^ab^3=1 \rangle$ $({\tt IdSmallGroup } [21,1])$,\\
$D_{22}$, $C_8\ltimes C_3=\langle a,b\; |\; a^8=b^3=b^ab=1 \rangle$ $({\tt IdSmallGroup } [24,1])$,\\
$D_{26}$, $C_4\ltimes C_7=\langle a,b\; |\; a^4=b^7=b^ab=1 \rangle$ $({\tt IdSmallGroup } [28,1])$,\\
$C_5\times S_3$, $C_3\times D_{10}$, $D_{30}$.
\end{thm}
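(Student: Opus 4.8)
The plan is to regard Theorem \ref{BI-groups} as an exhaustive verification carried out over the finitely many non-abelian groups $G$ with $|G|\le 30$, and to split the work into two opposite tasks: for each $G$ that does \emph{not} appear in the list, exhibit a single witness that it fails to be a BI-group, while for each $G$ \emph{in} the list, certify that no such witness exists. Two reductions organize the search. First, if $\alpha\in\mathrm{Aut}(G)$ then $Cay(G,S)\cong Cay(G,S^{\alpha})$, and since $\psi(t):=\chi(t^{\alpha})$ runs over the irreducible characters of the same degree as $\chi$, one has $\sum_{s\in S^{\alpha}}\chi(s)=\sum_{t\in S}\psi(t)$, so $M_\nu^{S^{\alpha}}=M_\nu^{S}$ for every $\nu$; hence it suffices to keep one representative from each $\mathrm{Aut}(G)$-orbit of inverse-closed subsets of $G^*$. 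Second, by \cite[Proposition 2.5]{AM} every CI-group is a BI-group, so in the affirmative direction the real work is confined to those groups in the list that are not CI-groups.

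For the negative direction I would, for each excluded group, enumerate the $\mathrm{Aut}(G)$-orbit representatives $S$, record for each the isomorphism type of $Cay(G,S)$ together with the sets $M_\nu^{S}$ for all $\nu$, and then locate two representatives $S,T$ whose Cayley graphs are isomorphic but with $M_\nu^{S}\ne M_\nu^{T}$ for some $\nu$. Any such pair certifies that $G$ is not a BI-group. Several of these witnesses are already recorded in \cite{AM}; the rest are produced by the same orbit-by-orbit computation, deciding graph isomorphism through a canonical-form routine and evaluating the sums $\sum_{s\in S}\chi(s)$ from the character table of $G$.

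For the affirmative direction I would first mark which groups in the list are CI-groups (using the known classifications referenced in \cite{CRS, Dobson1, Lov, So1} and, where needed, direct computation) and dispatch them at once via ``CI $\Rightarrow$ BI''. The genuinely new content sits with the non-CI members of the list, for which $M_\nu$ must be shown constant across every graph-isomorphism class even though distinct $\mathrm{Aut}(G)$-orbits can yield isomorphic graphs. The faithful group $C_4\ltimes C_5$ of order $20$ (the group with {\tt IdSmallGroup} code $[20,3]$, which is precisely $\mathcal G$) is handled by Theorem \ref{F5}; the remaining non-CI groups in the list are verified directly by confirming, over the complete set of $\mathrm{Aut}(G)$-orbits, that whenever two Cayley graphs are isomorphic their character-sum data coincide.

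The main obstacle is this affirmative verification for the non-CI groups. Here the spectral invariant from Theorem \ref{Babai} is not by itself sufficient: a graph isomorphism forces cospectrality and so preserves the total multiset of eigenvalues, but each eigenvalue of $Cay(G,S)$ is a sum of eigenvalues of the block $\sum_{s\in S}\rho(s)$, while the character sum $\sum_{s\in S}\chi(s)$ is only the trace of that block, so when several irreducible degrees coincide the coarse spectrum does not automatically refine to the degree-by-degree sets $M_\nu$. Establishing that this refinement nevertheless holds for \emph{every} isomorphic pair---rather than merely exhibiting one failure---is what makes the positive cases delicate, and is the step where either a structural representation-theoretic argument (as in Theorem \ref{F5}) or a careful and complete orbit enumeration is required; reliably certifying graph isomorphism and non-isomorphism across all subset pairs is the accompanying computational difficulty.
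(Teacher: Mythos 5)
Your proposal is correct in outline and shares the paper's skeleton --- an exhaustive case check in which the listed groups are dispatched by ``CI $\Rightarrow$ BI'' (Theorem \ref{BICI}) plus Theorem \ref{F5} for the unique non-CI member $[20,3]$, and the excluded groups are killed by exhibiting an isomorphic pair $Cay(G,S)\cong Cay(G,T)$ with $M_\nu^S\neq M_\nu^T$ --- but the execution of the negative direction differs substantially. Where you propose a full orbit-by-orbit enumeration of inverse-closed subsets for every excluded group, the paper instead proves general structural criteria (Theorems \ref{nonBI}, \ref{sem}, \ref{di}, together with Theorem \ref{dihedral} and Proposition \ref{SL}) that manufacture a witness of size two: if $h\notin G'$ and $k\in G'$ have the same order and the linear characters are real on $h$, then $S=\{h,h^{-1}\}$ and $T=\{k,k^{-1}\}$ give isomorphic (disconnected) Cayley graphs with $M_1^S\neq M_1^T$ because every linear character is trivial on $G'$ but not all are trivial on $h$. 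This settles most excluded groups with no search at all, and only five groups of order $24$ are left to a direct GAP computation; for that residue the paper's feasibility reduction is the complement trick of Proposition \ref{red-BI} (restrict to connected $Cay(G,S)$ with $|S|\ge |G|/2-1$) rather than your $\mathrm{Aut}(G)$-orbit reduction, though your reduction is also valid and is essentially the mechanism behind ``CI $\Rightarrow$ BI''. Your approach buys uniformity and would in principle certify everything by one algorithm; the paper's buys short human-checkable witnesses and a much smaller computation. You also correctly isolate the one genuine subtlety of the positive direction --- that cospectrality does not automatically refine to the degree-by-degree sets $M_\nu$ --- which is exactly what Theorems \ref{g20} and \ref{F5} are designed to overcome for $[20,3]$; since every other listed group of order at most $30$ is a CI-group, no further direct verification is actually needed there.
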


\section{\bf Preliminaries}

The following result of Babai \cite{B} gives  a useful relation between spectra of Cayley graphs and the irreducible characters of the underlying group.
\begin{thm}\cite[Theorem 3.1]{B}\label{Babai}
Let $G$ be a finite group whose all complex irreducible characters are denoted by $\chi_1,\dots,\chi_h$ and the degree $\chi_i(1)$ of $\chi_i$ is 
denoted by $n_i$ for each $i=1,\dots,h$. Suppose $S$ is an inverse closed subset of $G^*$.
Then, the spectrum of the Cayley graph $Cay(G,S)$ can be arranged as 
$$spec(Cay(G,S))=\{\lambda_{ijk}| i=1,\dots ,h; j,k=1,\dots,n_i\},$$
where $\lambda_{ij1}=\dots=\lambda_{ijn_i}$, and this common value will be denoted by $\lambda_{ij}$. Also:
$$\lambda_{i1}^t+\dots +\lambda_{in_i}^t=\sum_{s_1,\dots ,s_t\in S} \chi_i (s_1\cdots s_t),$$
for any positive integer $t$.
\end{thm}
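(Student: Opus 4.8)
The plan is to realize the adjacency matrix of $Cay(G,S)$ as an operator on the group algebra $\mathbb{C}[G]$ and to diagonalize it block-by-block using the Wedderburn decomposition of the regular representation. First I would fix the standard basis $\{e_x : x\in G\}$ of $\mathbb{C}[G]$ and set $\sigma=\sum_{s\in S}s\in\mathbb{C}[G]$. Writing $A$ for the adjacency matrix of $Cay(G,S)$, a direct check of entries shows that $A$ coincides with the matrix of right multiplication by $\sigma$: indeed $A_{x,y}=[x^{-1}y\in S]=\sum_{s\in S}[y=xs]$, which is exactly the action $e_x\mapsto\sum_{s\in S}e_{xs}$. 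In particular $A$ commutes with the left regular representation $L(g)\colon e_x\mapsto e_{gx}$, so $A$ lies in the commutant $\mathrm{End}_{\mathbb{C}[G]}(\mathbb{C}[G])$.

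Next I would invoke Maschke's theorem and the Artin--Wedderburn decomposition to write the regular representation as $\bigoplus_{i=1}^h n_i\rho_i$, where $\rho_1,\dots,\rho_h$ are the (unitary) irreducible representations affording $\chi_1,\dots,\chi_h$ and each $\rho_i$ occurs with multiplicity $n_i=\chi_i(1)$. After a unitary change of basis adapting the standard basis to this isotypic decomposition, the operator $A$ becomes block diagonal, the block attached to the $i$-th isotypic component being $\rho_i(\sigma)\otimes I_{n_i}$, where $\rho_i(\sigma)=\sum_{s\in S}\rho_i(s)$. Consequently the eigenvalues of $\rho_i(\sigma)$ each appear in $\mathrm{spec}(A)$ with multiplicity $n_i$; calling these eigenvalues $\lambda_{i1},\dots,\lambda_{in_i}$ and indexing their $n_i$-fold repetitions by $k=1,\dots,n_i$ yields precisely the required arrangement with $\lambda_{ij1}=\cdots=\lambda_{ijn_i}=:\lambda_{ij}$. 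Because $S$ is inverse closed and each $\rho_i$ is unitary, $\rho_i(\sigma)^\ast=\sum_{s\in S}\rho_i(s^{-1})=\rho_i(\sigma)$, so $\rho_i(\sigma)$ is Hermitian and the $\lambda_{ij}$ are real.

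For the power-sum identity I would compute traces on a single block. Since the eigenvalues of $\rho_i(\sigma)$ are $\lambda_{i1},\dots,\lambda_{in_i}$, we have $\sum_{j=1}^{n_i}\lambda_{ij}^t=\mathrm{tr}\big(\rho_i(\sigma)^t\big)$. Expanding $\rho_i(\sigma)^t=\big(\sum_{s\in S}\rho_i(s)\big)^t=\sum_{s_1,\dots,s_t\in S}\rho_i(s_1)\cdots\rho_i(s_t)$ and using multiplicativity $\rho_i(s_1)\cdots\rho_i(s_t)=\rho_i(s_1\cdots s_t)$, taking the trace gives $\sum_{s_1,\dots,s_t\in S}\chi_i(s_1\cdots s_t)$, which is the claimed formula.

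I expect the main obstacle to be the bookkeeping in the second step: justifying carefully that, under the change of basis realizing the Wedderburn decomposition, the single operator $A$ (right multiplication by $\sigma$, which is \emph{not} central) really does break up as $\bigoplus_i\rho_i(\sigma)\otimes I_{n_i}$ with the multiplicity $n_i$ landing on the correct factor. This requires identifying the commutant $\mathrm{End}_{\mathbb{C}[G]}(\mathbb{C}[G])\cong\bigoplus_i M_{n_i}(\mathbb{C})$ and tracking the right action against the contragredient factor $V_i^\ast$; since $S=S^{-1}$ forces $\rho_i(\sigma)$ to be Hermitian with real spectrum, the passage between $\rho_i$ and its contragredient changes neither the eigenvalue multiset nor the (real) trace sums, so the indices can be matched exactly as stated.
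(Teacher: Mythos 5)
Your proof is correct. The paper itself gives no proof of this statement---it is quoted from Babai \cite{B} as a known result---and your argument (identifying the adjacency matrix with right multiplication by $\sigma=\sum_{s\in S}s$, noting it lies in the commutant of the left regular representation, applying the Wedderburn decomposition so that the $i$-th block contributes the eigenvalues of $\rho_i(\sigma)$ each with multiplicity $n_i$, and computing $\sum_j\lambda_{ij}^t=\mathrm{tr}\bigl(\rho_i(\sigma)^t\bigr)$) is precisely the standard representation-theoretic proof, essentially Babai's original one, with the contragredient/transposition subtlety correctly dispatched since it does not affect the eigenvalue multiset or the traces.
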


It was shown in \cite{AM} that all CI-groups are BI-groups.
\begin{thm}\cite[Propositions 2.6]{AM}\label{BICI}
Every finite CI-group is a BI-group.
\end{thm}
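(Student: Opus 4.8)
The plan is to reduce the defining property of a BI-group entirely to the CI-hypothesis, so that no genuinely graph-theoretic argument is needed. Suppose $G$ is a finite CI-group and that $Cay(G,S)\cong Cay(G,T)$ for inverse closed subsets $S,T$ of $G^*$. By the CI-property this isomorphism immediately yields an automorphism $\alpha$ of $G$ with $T=S^\alpha=\{s^\alpha\mid s\in S\}$. From this point the graph isomorphism plays no further role: it remains only to verify that replacing the connection set by its image under an automorphism does not change the sets $M_\nu^S$, that is, that $M_\nu^{S^\alpha}=M_\nu^S$ for every positive integer $\nu$.

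The key tool I would use is that precomposition with $\alpha$ acts as a degree-preserving permutation of the irreducible characters of $G$. Concretely, if $\rho$ is a representation affording an irreducible character $\chi$, then $\rho\circ\alpha$ is again a representation; it is irreducible because $\alpha$ is a bijection of $G$, so $\rho$ and $\rho\circ\alpha$ have exactly the same invariant subspaces; and it affords the character $\chi^\alpha$ given by $\chi^\alpha(g)=\chi(g^\alpha)$. Since $1^\alpha=1$ we have $\chi^\alpha(1)=\chi(1)$, so $\chi\mapsto\chi^\alpha$ preserves degrees, and it is a bijection of the set of irreducible characters because $\alpha^{-1}$ supplies the inverse map. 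Hence, for each fixed $\nu$, as $\chi$ ranges over all irreducible characters of degree $\nu$, the character $\chi^\alpha$ ranges over exactly the same set.

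With this in hand the conclusion is a one-line reindexing. Because $\alpha$ is a bijection, $\sum_{t\in T}\chi(t)=\sum_{s\in S}\chi(s^\alpha)=\sum_{s\in S}\chi^\alpha(s)$ for every $\chi$, and therefore
\[
M_\nu^T=\Big\{\sum_{t\in T}\chi(t)\ \Big|\ \chi(1)=\nu\Big\}=\Big\{\sum_{s\in S}\chi^\alpha(s)\ \Big|\ \chi(1)=\nu\Big\}=\Big\{\sum_{s\in S}\psi(s)\ \Big|\ \psi(1)=\nu\Big\}=M_\nu^S,
\]
where in the third equality I used that $\psi=\chi^\alpha$ runs over all degree-$\nu$ irreducibles as $\chi$ does. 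For values of $\nu$ that are not degrees of any irreducible character both sides are empty, so the equality holds trivially there as well.

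I do not expect a serious obstacle here: the only substantive ingredient is the automorphism-invariance of the set of irreducible characters, which is classical. The one point that needs care is the bookkeeping in the last display, namely that $\chi\mapsto\chi^\alpha$ is \emph{onto} the degree-$\nu$ characters (not merely into them), so that one obtains set equality rather than a single inclusion; this is exactly where the invertibility of $\alpha$ enters. If one prefers to avoid invoking surjectivity directly, the same conclusion follows by proving $M_\nu^T\subseteq M_\nu^S$ via $\alpha$ and the reverse inclusion via $\alpha^{-1}$.
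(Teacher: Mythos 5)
Your argument is correct and is the standard proof of this fact; the paper itself only cites it from \cite[Proposition 2.6]{AM} without reproducing the argument, and the cited proof proceeds exactly as you do, via the degree-preserving bijection $\chi\mapsto\chi^\alpha$ of $Irr(G)$ induced by the automorphism $\alpha$ with $S^\alpha=T$. Your care about surjectivity onto the degree-$\nu$ characters (so that one gets $M_\nu^T=M_\nu^S$ as sets rather than a one-sided inclusion) is exactly the right point to flag, and it is handled correctly.
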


For a finite group $G$, $Irr(G)$ denotes the set of all complex irreducible characters of $G$; and $1_G$ denotes the principal character of $G$
 
\begin{lem}
Let $G$ be a finite group and $S,T$ be inverse closed subsets of $G^*$. If $M_\nu ^S=M_\nu ^T$, then $M_\nu ^{S^c}=M_\nu ^{T^c}$, where $S^c=G^*\setminus S$.
\end{lem}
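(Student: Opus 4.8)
The plan is to express each complement sum in terms of the corresponding sum over $S$ by completing to the full group and invoking the first orthogonality relation. For $\chi\in Irr(G)$ with $\chi(1)=\nu$ we have
\[
\sum_{s\in S^c}\chi(s)=\Big(\sum_{g\in G^*}\chi(g)\Big)-\sum_{s\in S}\chi(s),
\]
and since $\sum_{g\in G}\chi(g)$ equals $|G|$ for $\chi=1_G$ and $0$ otherwise, the term $\sum_{g\in G^*}\chi(g)=\sum_{g\in G}\chi(g)-\chi(1)$ equals $-\nu$ for every nonprincipal $\chi$ of degree $\nu$, while it equals $|G|-1$ when $\chi=1_G$.

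First I would dispose of the generic case $\nu\ge 2$, where no degree-$\nu$ character is principal. There the identity above reads $\sum_{s\in S^c}\chi(s)=-\nu-\sum_{s\in S}\chi(s)$ for all $\chi$ with $\chi(1)=\nu$, so the affine bijection $\phi\colon x\mapsto -\nu-x$ of the complex plane satisfies $M_\nu^{S^c}=\phi(M_\nu^S)$ and $M_\nu^{T^c}=\phi(M_\nu^T)$. Applying $\phi$ to the hypothesis $M_\nu^S=M_\nu^T$ yields $M_\nu^{S^c}=M_\nu^{T^c}$ at once.

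The case $\nu=1$ is where the real work lies, because $1_G$ has degree $1$ and breaks the uniform shift; the strategy is to peel it off. Since $S$ is inverse closed and $\abs{\chi(s)}=1$ for a linear character, each sum $\sum_{s\in S}\chi(s)$ is real and at most $|S|$ in absolute value, with the value $|S|$ realized by $1_G$; hence $|S|=\max M_1^S$, and the hypothesis forces $|S|=|T|$. The principal character then contributes the same value $\sum_{s\in S^c}1_G(s)=|S^c|=|G|-1-|S|=|T^c|$ to both complements, and on every nonprincipal linear character the shift $\phi(x)=-1-x$ again applies.

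The main obstacle is precisely the final matching in this $\nu=1$ case. Knowing only the set equality $M_1^S=M_1^T$, I cannot immediately rule out that the value $|S|$ is attained by some nonprincipal linear character for $S$ but not for $T$; this is the one place where the two complements could in principle diverge, since it controls whether $-1-|S|$ lies in the image of the nonprincipal part under $\phi$. I expect to settle this by carrying along the family (equivalently, the multiset) of linear character sums indexed by the degree-$1$ characters rather than the bare set: removing the single principal contribution $|S|=|T|$ from the equal families leaves equal families of nonprincipal sums, to which $\phi$ applies termwise before the common principal value $|G|-1-|S|$ is re-adjoined. The remaining ingredients—the orthogonality computation of the first paragraph and the reality argument pinning down $|S|=|T|$—are routine.
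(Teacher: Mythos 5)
Your proposal follows the paper's proof almost step for step: the same decomposition $\sum_{s\in S^c}\chi(s)=\sum_{g\in G^*}\chi(g)-\sum_{s\in S}\chi(s)$, the same evaluation $\sum_{g\in G^*}\chi(g)=-\chi(1)$ for nonprincipal $\chi$ (the paper cites Isaacs, Corollary~2.14), the resulting uniform shift $x\mapsto-\nu-x$ which settles all $\nu\ge 2$, and the identification of $|S|$ as $\max M_1^S$ to force $|S|=|T|$. One genuine (minor) improvement: you justify $|S|=\max M_1^S$ directly from the reality of linear character sums over an inverse-closed set and the triangle inequality, whereas the paper detours through Theorem~\ref{Babai} and the largest eigenvalue of a regular graph.

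The gap is in your last step. You correctly isolate the one delicate point --- whether the value $|S|$ could be attained by a nonprincipal linear character for $S$ but not for $T$ --- but your proposed resolution, passing to the \emph{family} (multiset) of linear character sums and ``removing the single principal contribution from the equal families,'' is not licensed: the hypothesis is only the set equality $M_1^S=M_1^T$, and equality of the indexed families is strictly stronger information that you are not given. Concretely, writing $N^S=\big\{\sum_{s\in S}\lambda(s)\;:\;\lambda\ \text{linear},\ \lambda\ne 1_G\big\}$, what is needed is $N^S=N^T$; the set equality $\{|S|\}\cup N^S=\{|S|\}\cup N^T$ only yields $N^S\setminus\{|S|\}=N^T\setminus\{|S|\}$, and the scenario $|S|\in N^S\setminus N^T$ (i.e.\ $S$ lies in the kernel of some nonprincipal linear character while $T$ does not) is exactly what must be excluded --- if it occurred, $-1-|S|$ would belong to $M_1^{S^c}$ but not to $M_1^{T^c}$, so the conclusion itself would fail, not just the proof. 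You should be aware that the paper's own argument does not close this either: its displayed set equality \eqref{eq} is asserted for all $\nu$, but for $\nu=1$ it already presupposes $N^S=N^T$. So you have put your finger on a real subtlety, but the multiset device does not resolve it from the stated hypothesis; an actual proof must rule out the asymmetric kernel scenario (or restrict to generating sets $S$, $T$, as in the applications of the lemma later in the paper).
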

\begin{proof}
 We may write
$$\sum_{s\in S^c}\chi(s)=\sum_{g\in G^*}\chi(g)-\sum_{s\in S}\chi(s)$$
for any $\chi \in Irr(G)$.
By \cite[Corollary 2.14]{Isaacs}
\begin{displaymath}
\sum_{g\in G^*} \chi(g)=\left\{ {\begin{array}{cc}
|G|-1 & \chi=1_G\\\\
-\chi(1) & \chi\neq 1_G
\end{array}}\right..
\end{displaymath}
Therefore
\begin{displaymath}
\sum_{s\in S^c} \chi(s)=\left\{ \begin{array}{rcl}
|G|-1-|S| & \chi=1_G\\
-\chi(1) -\sum_{s\in S} \chi(s) & \chi\neq 1_G
\end{array}.\right.
\end{displaymath}
It follows that  
\begin{equation}\label{eq}
\big\{ \sum_{s\in S^c} \chi(s) \;|\;  \chi \in Irr(G), \chi(1)=\nu, \chi\neq 1_G \big\}= \big \{ \sum_{t\in T^c} \chi(t) \;|\;  \chi \in Irr(G), \chi(1)=\nu, \chi\neq 1_G \big \}.
\end{equation}
 
Therefore $M_\nu ^{S^c} = M_\nu ^{T^c}$ for all $\nu>1$, since the degree of $1_G$ is $1$.

Now assume that $\nu=1$. 

By Theorem \ref{Babai}, the set $M_1^S$ consists of some eigenvalues of $Cay(G,S)$. Also $|S|=\sum_{s\in S} 1_G(s) \in M_1^S$ is the largest eigenvalues of $Cay(G,S)$ \cite[Proposition 1.1.2]{CRS}. Therefore $|S|$ is the maximum of $M_1^S$ and so $|S|=|T|$, since $M_1^S=M_1^T$. It follows from the set equality \eqref{eq} that $M_1^{S^c}=M_1^{T^c}$. This completes the proof.    
\end{proof}

\begin{cor}\label{cor1}
Let $G$ be a finite group and $S$ be an inverse closed subset of $G^*$. Then, $Cay(G,S)$ is a BI-graph if and only if $Cay(G,S^c)$ is a BI-graph.
\end{cor}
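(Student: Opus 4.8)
The plan is to exploit the fact that $Cay(G,S^c)$ is precisely the complement of the graph $Cay(G,S)$. Indeed, for distinct vertices $x,y$ we have $x^{-1}y\in G^*$, and $x^{-1}y\in S^c$ holds exactly when $x^{-1}y\notin S$; thus two vertices are adjacent in $Cay(G,S^c)$ if and only if they are non-adjacent in $Cay(G,S)$. Writing $\overline{\Gamma}$ for the complement of a graph $\Gamma$, this says $Cay(G,S^c)=\overline{Cay(G,S)}$. I would then invoke the elementary fact that any isomorphism between $\Gamma_1$ and $\Gamma_2$ is simultaneously an isomorphism between $\overline{\Gamma_1}$ and $\overline{\Gamma_2}$, so $\Gamma_1\cong \Gamma_2$ if and only if $\overline{\Gamma_1}\cong \overline{\Gamma_2}$.

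Since $(S^c)^c=S$, it suffices by symmetry to prove that if $Cay(G,S)$ is a BI-graph then so is $Cay(G,S^c)$. I would therefore assume $Cay(G,S)$ is a BI-graph and suppose $Cay(G,S^c)\cong Cay(G,T)$ for some inverse closed subset $T$ of $G^*$. Passing to complements gives $Cay(G,S)=\overline{Cay(G,S^c)}\cong \overline{Cay(G,T)}=Cay(G,T^c)$. Because $Cay(G,S)$ is a BI-graph, the isomorphism $Cay(G,S)\cong Cay(G,T^c)$ forces $M_\nu^S=M_\nu^{T^c}$ for every positive integer $\nu$.

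The final step is to feed this equality into the preceding Lemma. Applying it to the pair $(S,T^c)$, the hypothesis $M_\nu^S=M_\nu^{T^c}$ yields $M_\nu^{S^c}=M_\nu^{(T^c)^c}=M_\nu^T$ for all $\nu$, which is exactly the condition required to conclude that $Cay(G,S^c)$ is a BI-graph. The converse is obtained by repeating the argument with the roles of $S$ and $S^c$ interchanged.

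I expect no serious obstacle: the arithmetic content is carried entirely by the Lemma, and the only genuinely new ingredient is the complement identification together with the standard closure of graph isomorphism under complementation. The one point deserving care is that $T^c$ must again be inverse closed, so that $Cay(G,T^c)$ is a legitimate competitor in the definition of a BI-graph; this holds because if $x\in T^c$ then $x^{-1}\in G^*$ and $x^{-1}\notin T$ (otherwise $x=(x^{-1})^{-1}\in T$ by inverse-closedness of $T$), so inversion commutes with complementation inside $G^*$.
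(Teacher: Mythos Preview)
Your proof is correct and is precisely the argument the paper intends: the corollary is stated without proof because it follows immediately from the preceding Lemma together with the observation that $Cay(G,S^c)=\overline{Cay(G,S)}$ and that graph isomorphism is preserved under complementation. Your explicit check that $T^c$ is again inverse closed is a nice touch that the paper leaves implicit.
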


\begin{prop}\label{red-BI}
Let $G$ be a finite group. Then, $G$ is a BI-group if and only if $Cay(G,S)$ is a BI-graph for all inverse closed subsets $S$ of $G^*$ such that $G=\langle S \rangle$ and $\frac{|G|}{2}-1\leq |S| \leq |G|-1$.
\end{prop}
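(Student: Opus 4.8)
The forward implication is immediate: if $G$ is a BI-group, then by definition every $Cay(G,S)$ is a BI-graph, in particular those whose connection set satisfies the stated conditions. For the converse, I start from an arbitrary inverse closed $S\subseteq G^*$ and show $Cay(G,S)$ is a BI-graph, reducing to the subsets permitted by the hypothesis by two moves: complementation and a generation argument. The plan is to handle everything formally except for one genuinely non-trivial boundary case, which I treat by a direct character computation.

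The first move normalizes the size of $S$. Since $|S|+|S^c|=|G|-1$, at least one of $S,S^c$ has cardinality $\geq (|G|-1)/2\geq \frac{|G|}{2}-1$. By Corollary \ref{cor1}, $Cay(G,S)$ is a BI-graph if and only if $Cay(G,S^c)$ is, so after replacing $S$ by $S^c$ if necessary I may assume from now on that $\frac{|G|}{2}-1\leq |S|\leq |G|-1$. The second move inspects $\langle S\rangle$. If $\langle S\rangle=G$, then $S$ is exactly one of the subsets covered by the hypothesis and there is nothing left to prove. Otherwise put $H=\langle S\rangle$, a proper subgroup; since $S\subseteq H^*$ we have $|S|\leq |H|-1$, while a proper subgroup satisfies $|H|\leq \frac{|G|}{2}$. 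Combined with $|S|\geq \frac{|G|}{2}-1$ this forces $|H|=\frac{|G|}{2}$, hence $[G:H]=2$, and then $S=H^*$. (In particular this case arises only when $|G|$ is even.)

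It remains to treat $S=H^*$ for an index $2$ subgroup $H$, and this is the only step that is not pure bookkeeping. Here $Cay(H,H^*)$ is the complete graph $K_{|H|}$, so $Cay(G,H^*)$ is the disjoint union of its two $H$-cosets, i.e. two copies of $K_{|G|/2}$. If $Cay(G,T)\cong Cay(G,H^*)$, then $Cay(G,T)$ is likewise a disjoint union of two cliques of size $|G|/2$; since the connected components of a Cayley graph are the cosets of $\langle T\rangle$, with the component of the identity isomorphic to $Cay(\langle T\rangle,T)$, this forces $[G:\langle T\rangle]=2$ and $T=K^*$ where $K=\langle T\rangle$. Thus it suffices to show $M_\nu^{H^*}=M_\nu^{K^*}$ for any two index $2$ subgroups $H,K$. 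I would obtain this from standard character theory: writing $1_H^{G}=1_G+\lambda_H$ for the character induced from the trivial character of $H$ (with $\lambda_H$ the linear character whose kernel is $H$), Frobenius reciprocity gives $\sum_{h\in H}\chi(h)=|H|\langle\chi,1_H^{G}\rangle$, so that $\sum_{s\in H^*}\chi(s)=\frac{|G|}{2}-1$ when $\chi\in\{1_G,\lambda_H\}$ and $\sum_{s\in H^*}\chi(s)=-\chi(1)$ otherwise. Consequently $M_\nu^{H^*}=\{-\nu\}$ for $\nu>1$ and $M_1^{H^*}$ is $\{\frac{|G|}{2}-1\}$ or $\{\frac{|G|}{2}-1,-1\}$ according to whether $|G/[G,G]|$ equals $2$ or is larger; a short check shows these sets depend only on $G$ and $\nu$ and not on the particular index $2$ subgroup (when $|G/[G,G]|=2$ there is only one such subgroup). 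Hence $M_\nu^{H^*}=M_\nu^{K^*}$ for all $\nu$, so $Cay(G,H^*)$ is a BI-graph.

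The two reductions together show that every $Cay(G,S)$ is a BI-graph, which is the converse. I expect the main obstacle to be precisely the last case $S=H^*$: one must both recognize that an isomorphic Cayley graph forces its connection set to be $K^*$ for an index $2$ subgroup $K$, and verify that the relevant character sums are independent of the chosen index $2$ subgroup. The size and generation reductions, by contrast, are elementary counting arguments using Corollary \ref{cor1} and Lagrange's theorem.
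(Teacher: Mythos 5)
Your proof is correct, and it rests on the same engine as the paper's --- Corollary \ref{cor1} together with complementation --- but you order the two reductions differently, and this costs you an extra case that the paper never has to face. The paper first normalizes \emph{generation}: if $\langle S\rangle\neq G$ then $Cay(G,S)$ is disconnected, its complement $Cay(G,S^c)$ is connected, hence $\langle S^c\rangle=G$; moreover $S\subseteq H^*$ for a proper subgroup $H$ forces $|S|\leq \frac{|G|}{2}-1$, so $|S^c|\geq \frac{|G|}{2}$ is automatically in range. It then normalizes \emph{size}: if a generating $S$ has $|S|<\frac{|G|}{2}-1$, then $|S^c|>\frac{|G|}{2}$, whence $S^cS^c=G$ by a pigeonhole on $S^c\cap gS^c$, so $S^c$ both generates and lies in range. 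Nothing else is needed. By normalizing size first, you are left with the boundary case $S=H^*$ with $[G:H]=2$, which does not satisfy the hypothesis, and you resolve it by a direct character computation (recognizing that any $T$ with $Cay(G,T)\cong Cay(G,H^*)$ must be $K^*$ for an index-$2$ subgroup $K$, and computing the sums over $H^*$ via $1_H^G=1_G+\lambda_H$). That computation is correct, but it is avoidable even within your own scheme: for $S=H^*$ the complement $S^c=G\setminus H$ is inverse closed, generates $G$ (it is contained in no proper subgroup, since $|G\setminus H|=\frac{|G|}{2}$ exceeds the size of any proper subgroup minus one), and has size $\frac{|G|}{2}\geq\frac{|G|}{2}-1$, so Corollary \ref{cor1} already reduces it to a set covered by the hypothesis. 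What your longer route buys is an explicit description of the character sums on $H^*$ and of the graphs involved; what the paper's ordering buys is a three-line proof with no character theory at all.
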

\begin{proof}
It follows from Corollary \ref{cor1} and the fact that the complement of a disconnected graph is a connected graph, that a  $G$ is a BI-group if and only if $Cay(G,S)$ is a BI-graph for all inverse closed subsets $S$ of $G^*$ such that $G=\langle S \rangle$. Now if $|S|<\frac{|G|}{2}-1$, then $|S^c|>\frac{|G|}{2}$ and so $G=S^c S^c$. In particular  $G=\langle S^c\rangle$. This completes the proof.
\end{proof}

With the aid of the following codes  written in  GAP \cite{GAP} one can check whether a given finite group is a BI-group or not.\\
The function {\tt M} determines whether for  two given inverse closed subsets {\tt S} and {\tt T} without non-trivial element of a group {\tt g} and a given positive integer {\tt v}, $M_{\tt v}^{\tt S}=M_{\tt v}^{\tt T}$ or not. The function {\tt BIn} determines for each character degree $v$ of a given finite group {\tt g} all pairs $(S,T)$ of inverse closed subsets of a given size {\tt n} of {\tt g} such that   $Cay({\tt g},S)\cong Cay({\tt g},T)$ and $M_v^S \neq M_v^T$. The function {\tt BI} determines whether or not a given finite group {\tt g} is a BI-group by giving all pairs $(S,T)$ and positive integers $v$ such that $Cay({\tt g},S)\cong Cay({\tt g},T)$ and $M_v^S \neq M_v^T$.  
\begin{verbatim}
LoadPackage("Grape");

M:=function(g,S,T,v)
local S1,T1,Xv,MSv,MTv,C,D;
D:=CharacterDegrees(g);
C:=ConjugacyClasses(g);
S1:=List(S,j->Filtered([1..Size(C)],i->(j in C[i])=true)[1]);
T1:=List(T,j->Filtered([1..Size(C)],i->(j in C[i])=true)[1]);
Xv:=Filtered(Irr(g),i->i[1]=v);
MSv:=Set(List(Xv,i->Sum(S1,j->i[j])));
MTv:=Set(List(Xv,i->Sum(T1,j->i[j])));
return MSv=MTv;
end;

BIn:=function(g,n)
local SS,TT,GG,NG,NN;
SS:=Filtered(Combinations(Difference(Elements(g),[One(g)]),n),i->
		Set(i,k->k^-1)=i);
TT:=Set(Filtered(SS, i->GroupWithGenerators(i)=g));
GG:=Filtered(Combinations( TT , 2 ),i->IsIsomorphicGraph
		(CayleyGraph(g,i[1]),CayleyGraph(g,i[2])));
NG:=List(Set( Irr( g ), DegreeOfCharacter ),i->[i,Filtered(GG,j->
		M(g,j[1],j[2],i)=false)]);
return NG;
end;

BI:=function(g)
return List([(Size(g)/2)-1..Size(g)-1],i->BIn(g,i));
end;
\end{verbatim} \label{caodes}



\section{\bf Proof of Theorem \ref{F5}}

\begin{defn}\label{spe}
\begin{enumerate}
\item Let $\Gamma$ be a graph. We denote  by $spec(\Gamma)=(\mu_1^{l_1},\mu_2^{l_2},\dots,\mu_k^{l_k})$ the spectrum of $\Gamma$ \cite{CRS}, where $\mu_i$'s are all distinct (real) eigenvalues of $\Gamma$ and $l_i$ is the multiplicity of $\mu_i$ for $i=1,\dots,k$.
\item Let $G$ be a finite group. We denote by $S_k$ the set of all elements of order $k$ in $S\subset G$. So, if $\{l_1,\dots, l_r\}$ be the set of  orders of elements of  $S$, then:
$$S=S_{l_1}\cup \dots \cup S_{l_r}.$$
\end{enumerate}
\end{defn}



\begin{thm}\label{g20}
Let $\mathcal{G}=\langle a,b|a^5=b^4=1, a^b=a^3\rangle$, $S\subset \mathcal{G}^*$, $S^{-1}=S$, $\mathcal{G}=\langle S  \rangle$ and $\Gamma=Cay(\mathcal{G},S)$. Then the spectrum of $\Gamma$ is either
$$spec(\Gamma)=(\mu_1^1=|S|,\mu_2^{4k+1},\mu_3^{4k'+2},\mu_{4_1}^{4l_1},\dots,\mu_{4_e}^{4l_e})$$ 
or
$$spec(\Gamma)=(\mu_1^1=|S|,\theta^{4k''+3},\mu_{4_1}^{4l'_1},\dots,\mu_{4_f}^{4l'_f}),$$
where
$\mu_2=|S_2|-|S_4|+|S_5|$, $\mu_3=|S_5|-|S_2|$ with $\mu_2\neq \mu_3$, $\theta=|S_2|-|S_4|+|S_5|=|S_5|-|S_2|$, $0\leq k,k',k''\leq 4$, $0\leq l_i,l'_j\leq 4$ and $0 \leq e,f\leq 4$. In any case $|S|$ is the unique largest eigenvalue of $\Gamma$. 
\end{thm}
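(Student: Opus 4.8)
The plan is to read the entire spectrum off Babai's formula (Theorem \ref{Babai}) once the character table of $\mathcal{G}$ is in hand, and then to track multiplicities modulo $4$. First I would record that $\mathcal{G}=C_5\rtimes C_4$ is the Frobenius group of order $20$ with kernel $N=\langle a\rangle$; since $[a,b]=a^2$ generates $N$, its commutator subgroup is $N$, so $\mathcal{G}/N\cong C_4$ yields four linear characters $\psi_0,\psi_1,\psi_2,\psi_3$ with $\psi_j(a)=1$ and $\psi_j(b)=i^j$, and the remaining irreducible character $\chi$ has degree $4$ (as $4\cdot 1^2+4^2=20$). Its values are $\chi(1)=4$, $\chi(a^k)=-1$ on the order-$5$ class, and $\chi\equiv 0$ on every element outside $N$. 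I would also record the five conjugacy classes together with the element orders: $\{1\}$, the order-$5$ class $N\setminus\{1\}$, and the three cosets $Nb^2$ (order $2$), $Nb$ and $Nb^3$ (both order $4$), the latter two being interchanged by inversion.

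Next I would invoke Theorem \ref{Babai}. Each linear $\psi_j$ contributes the single eigenvalue $\sum_{s\in S}\psi_j(s)$ with multiplicity $1$, while $\chi$ contributes four (necessarily real, as $S$ is inverse closed) eigenvalues $\lambda_1,\dots,\lambda_4$, each of multiplicity $4$; these exhaust the $4\cdot 1+4\cdot 4=20$ eigenvalues. Writing $S=S_2\cup S_4\cup S_5$ as in Definition \ref{spe} and using that each $\psi_j$ is constant on cosets of $N$, a direct computation gives the four linear eigenvalues: $\sum_{s\in S}\psi_0(s)=|S|$; $\sum_{s\in S}\psi_2(s)=|S_5|+|S_2|-|S_4|=\mu_2$; and $\sum_{s\in S}\psi_1(s)=\sum_{s\in S}\psi_3(s)=|S_5|-|S_2|=\mu_3$.

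The key point behind the multiplicity $2$ is the coincidence $\sum_{s\in S}\psi_1(s)=\sum_{s\in S}\psi_3(s)$. Since inversion maps $Nb$ bijectively onto $Nb^3$ and $S$ is inverse closed, $|S_4\cap Nb|=|S_4\cap Nb^3|$; because $\psi_1$ takes the value $i$ on $Nb$ and $i^3=-i$ on $Nb^3$ (and $\psi_3=\overline{\psi_1}$), the order-$4$ contributions cancel for both characters, leaving the common real value $|S_5|-|S_2|$. Thus, among the linear part, $\mu_3$ already occurs with multiplicity $2$, while $\mu_2$ and $|S|$ each occur with multiplicity $1$.

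Finally I would assemble the multiplicities. Since $\mathcal{G}=\langle S\rangle$, the graph $\Gamma$ is connected and $|S|$-regular, so by Perron--Frobenius \cite{CRS} the eigenvalue $|S|$ is simple and is the unique largest; in particular no $\lambda_j$ equals $|S|$. Grouping equal values among $\lambda_1,\dots,\lambda_4$, each distinct $\chi$-value contributes a positive multiple of $4$ to the spectrum. Adding these to the linear contributions yields $|S|$ with multiplicity $1$, $\mu_2$ with multiplicity $1+4k=4k+1$ (where $k$ counts the $\lambda_j$ equal to $\mu_2$), $\mu_3$ with multiplicity $2+4k'=4k'+2$, and every other $\chi$-value $\mu_{4_i}$ with multiplicity $4l_i$; since $k+k'+\sum_i l_i=4$, the bounds $0\le k,k',l_i\le 4$ and $e\le 4$ follow at once. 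In the degenerate case $\mu_2=\mu_3$ (equivalently $|S_4|=2|S_2|$) the two linear values merge, so the common value $\theta$ collects multiplicity $1+2+4k''=4k''+3$ while the remaining $\chi$-values $\mu_{4_j}'$ have multiplicities $4l_j'$ with $k''+\sum_j l_j'=4$, which is the second displayed form. The only genuinely delicate points are the cancellation giving $\sum_{s\in S}\psi_1(s)=\sum_{s\in S}\psi_3(s)$ and the Perron--Frobenius input keeping $|S|$ simple and strictly largest; the rest is bookkeeping of residues modulo $4$.
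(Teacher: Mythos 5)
Your proposal is correct and follows essentially the same route as the paper: apply Babai's theorem, compute the four linear character sums (observing that the two faithful linear characters give the same real value $|S_5|-|S_2|$ because inversion swaps the two order-$4$ classes), use Perron--Frobenius for the simplicity of $|S|$, and note that the degree-$4$ character contributes eigenvalues whose multiplicities are multiples of $4$, then split into the cases $\mu_2\neq\mu_3$ and $\mu_2=\mu_3$. The only cosmetic difference is that you derive the character table from the Frobenius structure rather than quoting it.
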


\begin{proof}
The group $\mathcal{G}$ has $5$ conjugacy classes: $\mathcal{G}_1$, $\mathcal{G}_2$ and $\mathcal{G}_5$ are the conjugacy classes consisting of all elements of $\mathcal{G}$ of order $1$, $2$ and $5$, respectively; there are two conjugacy classes denoted by $\mathcal{G}_{4_1}$ and $\mathcal{G}_{4_2}$ whose elements are of order $4$ and if $x$ is an element of order $4$, then $x$ and $x^{-1}$ are not conjugate.
Let $S_{4_i}=S \cap \mathcal{G}_{4_i}$ for $i=1,2$; so $S_{4_1}^{-1}=S_{4_2}$. According to Definition \ref{spe}:
$$|S|=|S_2|+|S_{4_1}|+|S_{4_2}|+|S_5|,$$
and note that $S_4=S_{4_1}\cup S_{4_2}$.
The character table of $\mathcal{G}$ is:
\begin{table}[H]
\centering
\begin{tabular}{c||ccccc} 
Class&$\mathcal{G}_1$&$\mathcal{G}_2$&$\mathcal{G}_{4_1}$&$\mathcal{G}_{4_2}$&$\mathcal{G}_5$\\\hline
Size&$1$&$5$&$5$&$5$&$4$\\\hline
$\chi_1$&$1$&$1$&$1$&$1$&$1$\\
$\chi_2$&$1$&$1$&$-1$&$-1$&$1$\\
$\chi_3$&$1$&$-1$&$i$&$-i$&$1$\\
$\chi_4$&$1$&$-1$&$-i$&$i$&$1$\\
$\psi$&$4$&$0$&$0$&$0$&$-1$\\
\end{tabular}
\caption{\label{table:F}\small{Character table of the group $\mathcal{G}$}}
\end{table}

Now by Theorem \ref{Babai}, all eigenvalues of $\Gamma$ corresponding to linear characters $\chi_1,\dots,\chi_4$ of $\mathcal{G}$ are as follows:
\begin{align*}
\lambda_1 &=\sum_{s\in S}\chi_1 (s)=|S|=|S_2|+|S_{4_1}|+|S_{4_2}|+|S_5| = |S_2|+|S_4|+|S_5|\\
\lambda_2 &=\sum_{s\in S}\chi_2 (s)=|S_2|-|S_{4_1}|-|S_{4_2}|+|S_5|=|S_2|-|S_4|+|S_5|\\
\lambda_3 &=\sum_{s\in S}\chi_3 (s)=-|S_2|+|S_5|\\
\lambda_4 &=\sum_{s\in S}\chi_4 (s)=-|S_2|+|S_5|
\end{align*}
So $\lambda_3=\lambda_4$. Since $\Gamma$ is a connected $|S|$-regular graph, it follows from \cite[Proposition 1.1.2]{CRS} that $\mu_1:=\lambda_1=|S|$ is the unique largest eigenvalue of $\Gamma$. Thus $1$ occurs precisely one time in the multiplicity type of spectrum of $\Gamma$.
By Theorem \ref{Babai} there is a multiset $\Lambda:=\{\lambda_5,\dots,\lambda_{20}\}$ of $16$ eigenvalues of $\Gamma$ corresponding to the character $\psi$ of degree $4$ such that the multiplicity of each $\lambda\in \Lambda$ in $\Lambda$ is $4$, $8$, $12$ or $16$; note that the latter multiplicity is counted in $\Lambda$ and not in the spectrum of $\Gamma$.
If $\lambda_2\neq \lambda_3$, it follows from the above information that  $\Gamma$ has the spectrum of the first type; otherwise the spectrum of $\Gamma$ is of the second type. This completes the proof.
\end{proof}

\begin{proof}[Proof of Theorem \ref{F5}]
It follows from \cite[Theorem 1.1]{LLP} that the group $\mathcal{G}$ is not a CI-group.\\
To prove that $\mathcal{G}$ is a BI-group, by Proposition \ref{red-BI} it is enough to show that $M_\nu^S=M_\nu^T$ for every irreducible character degree $\nu$ of $\mathcal{G}$ and for all inverse closed subsets $S$ and $T$ of $\mathcal{G}^*$ such that  $Cay(\mathcal{G},S)\cong Cay(\mathcal{G},T)$ and $\mathcal{G}=\langle S\rangle=\langle T\rangle$.\\ 
First note that $\nu\in\{1,4\}$ and  $$M_1 ^S=\{ |S|, |S_2|-|S_4|+|S_5|, |S_5|-|S_2| \}, M_4^S= \{-|S_5| \} $$
and similarly $$M_1 ^T=\{ |T|, |T_2|-|T_4|+|T_5|, |T_5|-|T_2| \}, M_4^T= \{-|T_5| \}.$$
Since $Cay(\mathcal{G},S)\cong Cay(\mathcal{G},T)$, $spec(Cay(\mathcal{G},S))=spec(Cay(\mathcal{G},T))$. By Theorem \ref{g20}, it follows that 
\begin{displaymath}
\left\{ \begin{aligned}
&|S_2|+|S_4|+|S_5|=|S|=|T|=|T_2|+|T_4|+|T_5|\\
&|S_2|-|S_4|+|S_5|=|T_2|-|T_4|+|T_5|\\
& -|S_2|+|S_5| =-|T_2|+|T_5|
\end{aligned}.\right.
\end{displaymath}
Therefore, 
$$|S_2|=|T_2|,|S_4|=|T_4|,|S_5|=|T_5|,$$
and so $M_\nu ^S=M_\nu ^T$ for all $\nu$. This completes the proof.
\end{proof}

\section{\bf Proof of  Theorem \ref{F7}}

\begin{thm}\label{g42}
Let $\mathcal{H}=\langle a,b|a^7=b^6=1, a^b=a^3\rangle$, $S\subset \mathcal{H}^*$, $S^{-1}=S$, $\mathcal{H}=\langle S \rangle$ and $\Gamma=Cay(\mathcal{H},S)$. Then the spectrum of $\Gamma$ is one of the following four cases:
\begin{enumerate}
\item $spec(\Gamma)=(\mu_1^1=|S|,\mu_2^{6k+1},\mu_3^{6k'+2},\mu_4^{6k''+2},\gamma_1^{6l_1},\dots,\gamma_n^{6l_n})$,\\
where
$\mu_2=-|S_2|+|S_{3_1}|+|S_{3_2}|-|S_{6_1}|-|S_{6_2}|+|S_7|$, $\mu_3=|S_2|-|S_{3_1}|-|S_{6_1}|+|S_7|$,
$\mu_4=-|S_2|-|S_{3_1}|+|S_{6_1}|+|S_7|$ and $\mu_2$, $\mu_3$ and $\mu_4$ are pairwise distinct;
\item $spec(\Gamma)=(\mu_1^1=|S|,\mu_2^{6k+1},\mu_3^{6k'+4},\gamma_1^{6l_1},\dots,\gamma_n^{6l_n})$,\\
where
$\mu_2=-|S_2|+|S_{3_1}|+|S_{3_2}|-|S_{6_1}|-|S_{6_2}|+|S_7|$, $\mu_3=|S_2|-|S_{3_1}|-|S_{6_1}|+|S_7|=-|S_2|-|S_{3_1}|+|S_{6_1}|+|S_7|$, $\mu_2\neq \mu_3$;
\item $spec(\Gamma)=(\mu_1^1=|S|,\mu_2^{6k+3},\mu_3^{6k'+2},\gamma_1^{6l_1},\dots,\gamma_n^{6l_n})$,\\
where
$\mu_2=-|S_2|+|S_{3_1}|+|S_{3_2}|-|S_{6_1}|-|S_{6_2}|+|S_7|=-|S_2|-|S_{3_1}|+|S_{6_1}|+|S_7|$
and $\mu_3=|S_2|-|S_{3_1}|-|S_{6_1}|+|S_7|$, $\mu_2 \neq \mu_3$,
\item $spec(\Gamma)=(\mu_1^1=|S|,\mu_2^{6k+3},\mu_3^{6k'+2},\gamma_1^{6l_1},\dots,\gamma_n^{6l_n})$,\\
where
$\mu_2=-|S_2|+|S_{3_1}|+|S_{3_2}|-|S_{6_1}|-|S_{6_2}|+|S_7|=|S_2|-|S_{3_1}|-|S_{6_1}|+|S_7|$
and $\mu_3=-|S_2|-|S_{3_1}|+|S_{6_1}|+|S_7|$, $\mu_2 \neq \mu_3$;
\item $spec(\Gamma)=(\mu_1^1=|S|,\mu_2^{6k+5},\gamma_1^{6l_1},\dots,\gamma_n^{6l_n})$,
where
$\mu_2=-|S_2|+|S_{3_1}|+|S_{3_2}|-|S_{6_1}|-|S_{6_2}|+|S_7|=|S_2|-|S_{3_1}|-|S_{6_1}|+|S_7|=-|S_2|-|S_{3_1}|+|S_{6_1}|+|S_7|$;
\end{enumerate}
where
$k,k',k'',l_{i}$ and $n$ are integers in $\{0,\dots,6\}$.
\end{thm}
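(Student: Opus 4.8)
The plan is to follow the template of the proof of Theorem~\ref{g20}, replacing the group of order $20$ by the Frobenius group $\mathcal{H}=C_7\rtimes C_6$ of order $42$. First I would determine the conjugacy classes of $\mathcal{H}$ and its character table. Since $b$ acts on $\langle a\rangle\cong C_7$ through the order-$6$ automorphism $a\mapsto a^3$, the action of the complement $\langle b\rangle$ is fixed-point-free, so $\mathcal{H}$ is a Frobenius group with kernel $\langle a\rangle$ and complement $\langle b\rangle$. A direct count then gives exactly seven conjugacy classes: the identity $\mathcal{H}_1$; the class $\mathcal{H}_2$ of the $7$ involutions; two classes $\mathcal{H}_{3_1},\mathcal{H}_{3_2}$ of order-$3$ elements (represented by $b^2$ and $b^4$, each of size $7$); two classes $\mathcal{H}_{6_1},\mathcal{H}_{6_2}$ of order-$6$ elements (represented by $b$ and $b^5$, each of size $7$); and the single class $\mathcal{H}_7$ of the $6$ elements of order $7$. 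Writing $S_{3_i}=S\cap\mathcal{H}_{3_i}$ and $S_{6_i}=S\cap\mathcal{H}_{6_i}$ as in Definition~\ref{spe}, I note that inversion fixes $\mathcal{H}_2$ and $\mathcal{H}_7$ but interchanges $\mathcal{H}_{3_1}\leftrightarrow\mathcal{H}_{3_2}$ and $\mathcal{H}_{6_1}\leftrightarrow\mathcal{H}_{6_2}$; consequently, since $S=S^{-1}$, one has $|S_{3_1}|=|S_{3_2}|$ and $|S_{6_1}|=|S_{6_2}|$, which I will use repeatedly. Because $\mathcal{H}'=\langle a\rangle$, there are six linear characters $\lambda_0,\dots,\lambda_5$ (with $\lambda_j(a)=1$ and $\lambda_j(b)=\zeta^{\,j}$ for a fixed primitive sixth root of unity $\zeta$) and a single nonlinear irreducible character $\psi$ of degree $6$, induced from a nontrivial character of $\langle a\rangle$, which satisfies $\psi(1)=6$, $\psi\equiv-1$ on $\mathcal{H}_7$ and $\psi\equiv 0$ on the remaining non-identity classes (consistency check: $6\cdot 1^2+6^2=42$).

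Next I would apply Theorem~\ref{Babai}. For each linear character the associated eigenvalue is the character sum $\sum_{s\in S}\lambda_j(s)$. Evaluating these six sums on the classes above and using $|S_{3_1}|=|S_{3_2}|$, $|S_{6_1}|=|S_{6_2}|$ together with the identities $\zeta^2+\zeta^4=-1$, $\zeta+\zeta^5=1$ and $\zeta^3=-1$, I expect exactly four distinct expressions to appear, namely $\mu_1=|S|$ (from $\lambda_0$), $\mu_2$ (from $\lambda_3$), $\mu_3$ (from $\lambda_2$ and $\lambda_4$) and $\mu_4$ (from $\lambda_1$ and $\lambda_5$), where $\mu_2,\mu_3,\mu_4$ are the three quantities written in the statement. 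Thus, counted with multiplicity, the linear characters contribute $\mu_1$ once, $\mu_2$ once, $\mu_3$ twice and $\mu_4$ twice. For the degree-$6$ character, Theorem~\ref{Babai} produces $36$ eigenvalues arranged as six values $\lambda_{\psi,1},\dots,\lambda_{\psi,6}$, each repeated six times; hence every value occurring among the $\psi$-eigenvalues has multiplicity in $spec(\Gamma)$ divisible by $6$.

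I would then assemble the spectrum. Since $\mathcal{H}=\langle S\rangle$, the graph $\Gamma$ is connected and $|S|$-regular, so by \cite[Proposition 1.1.2]{CRS} the value $\mu_1=|S|$ is the unique largest eigenvalue and occurs with multiplicity exactly $1$ (in particular no $\psi$-eigenvalue equals it). Every other eigenvalue is either one of $\mu_2,\mu_3,\mu_4$ or a ``pure'' $\psi$-value $\gamma_i$, the latter with multiplicity $6l_i$. The multiplicity of each $\mu_k$ equals its linear contribution plus a multiple of $6$ coming from coincident $\psi$-values, so its residue modulo $6$ is governed entirely by the linear contribution. A case analysis according to which of $\mu_2,\mu_3,\mu_4$ coincide then yields the five listed forms: if all three are distinct their residues are $1,2,2$ (Case~1); if exactly two coincide the merged residue is $1{+}2=3$ or $2{+}2=4$ according to the pair, giving Cases~2--4; and if all three coincide the residue is $1{+}2{+}2=5$ (Case~5). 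The bounds $0\le k,k',k''\le 6$ and $0\le l_i\le 6$ follow from the facts that $\psi$ supplies only six eigenvalue-groups in total and that all multiplicities sum to $42$.

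The routine part is the evaluation of the six linear character sums, which is mechanical once the character table is in hand. The part demanding care is the bookkeeping: correctly pairing the two order-$3$ (respectively order-$6$) classes under inversion so that $|S_{3_1}|=|S_{3_2}|$ and $|S_{6_1}|=|S_{6_2}|$ are used consistently, and then verifying that the coincidence case analysis for $\mu_2,\mu_3,\mu_4$ is exhaustive and that each merge adds the linear contributions correctly to produce the stated residues modulo $6$. A final point that must be justified rather than assumed is that none of $\mu_2,\mu_3,\mu_4$ can equal $\mu_1$; this is exactly where the connectedness hypothesis $\mathcal{H}=\langle S\rangle$ enters, through the uniqueness of the largest eigenvalue of a connected regular graph.
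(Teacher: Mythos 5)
Your proposal is correct and follows essentially the same route as the paper's own proof: both determine the seven conjugacy classes and the character table of the Frobenius group $\mathcal{H}=C_7\rtimes C_6$, evaluate the six linear character sums via Theorem \ref{Babai} (obtaining $|S|$ once, $\mu_2$ once, and $\mu_3,\mu_4$ twice each), observe that the degree-$6$ character contributes eigenvalues only in multiplicity blocks divisible by $6$, use connectedness to isolate $|S|$ as the unique largest eigenvalue, and finish with the same case analysis on which of $\mu_2,\mu_3,\mu_4$ coincide. No substantive differences.
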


\begin{proof}
The group $\mathcal{H}$ has $7$ conjugacy classes: $\mathcal{H}_1$, $\mathcal{H}_2$ and $\mathcal{H}_7$ are the conjugacy classes consisting of all elements of $\mathcal{H}$ of order $1$, $2$ and $7$, respectively; the elements of order $i\in\{3,6\}$ of $\mathcal{H}$ are partitioned into $2$ conjugacy classes denoted by $\mathcal{H}_{i_1}$ and $\mathcal{H}_{i_2}$  and if $x$ is an element of order $i$, then $x$ and $x^{-1}$ are not conjugate.
Let $S_{i_j}=S \cap \mathcal{H}_{i_j}$ for $i=3,6$ and $j=1,2$; so $S_{i_1}^{-1}=S_{i_2}$ for $i=3,6$. According to Definition \ref{spe}:
$$|S|=|S_2|+|S_{3_1}|+|S_{3_2}|+|S_{6_1}|+|S_{6_2}|+|S_{7}|,$$
and note that $S_3=S_{3_1}\cup S_{3_2}$, $|S_{3_1}|=|S_{3_2}|$, $S_6=S_{6_1}\cup S_{6_2}$ and $|S_{6_1}|=|S_{6_2}|$.
The character table of $\mathcal{H}$ is:
\begin{table}[H]
\centering
\begin{tabular}{c||ccccccc} 
Class&$\mathcal{H}_1$&$\mathcal{H}_2$&$\mathcal{H}_{3_1}$&$\mathcal{H}_{3_2}$&$\mathcal{H}_{6_1}$&$\mathcal{H}_{6_2}$&$\mathcal{H}_7$\\\hline
Size&$1$&$7$&$7$&$7$&$7$&$7$&$6$\\\hline
$\chi_1$&$1$&$1$&$1$&$1$&$1$&$1$&$1$\\
$\chi_2$&$1$&$-1$&$1$&$1$&$-1$&$-1$&$1$\\
$\chi_3$&$1$&$1$&$\zeta_3^2$&$\zeta_3$&$\zeta_3$&$\zeta_3^2$&$1$\\
$\chi_4$&$1$&$1$&$\zeta_3$&$\zeta_3^2$&$\zeta_3^2$&$\zeta_3$&$1$\\
$\chi_5$&$1$&$-1$&$\zeta_3$&$\zeta_3^2$&$\zeta_6$&$\zeta_6^5$&$1$\\
$\chi_6$&$1$&$-1$&$\zeta_3^2$&$\zeta_3$&$\zeta_6^5$&$\zeta_6$&$1$\\
$\psi$&$6$&$0$&$0$&$0$&$0$&$0$&$-1$\\
\end{tabular}
\caption{\label{table:F7}\small{Character table of the group $\mathcal{H}$}}
\end{table}
where $\zeta_l=e^{2\pi {\mathbf i}/l}$ ($l=3,6$). 

Now by Theorem \ref{Babai}, all eigenvalues of $\Gamma$ corresponding to irreducible characters $\chi_1,\dots,\chi_6$ of $\mathcal{H}$ are as follows:
\begin{eqnarray*}
&&\lambda_1 =\sum_{s\in S}\chi_1 (s)=|S|=|S_2|+|S_{3_1}|+|S_{3_2}|+|S_{6_1}|+|S_{6_2}|+|S_7|\\
&&\lambda_2 =\sum_{s\in S}\chi_2 (s)=-|S_2|+|S_{3_1}|+|S_{3_2}|-|S_{6_1}|-|S_{6_2}|+|S_7|\\
&&\lambda_3=\sum_{s\in S}\chi_3 (s)=|S_2|-|S_{3_1}|-|S_{6_1}|+|S_7|\\
&&\lambda_4=\sum_{s\in S}\chi_4 (s)=|S_2|-|S_{3_1}|-|S_{6_1}|+|S_7|\\
&&\lambda_5=\sum_{s\in S}\chi_5 (s)=-|S_2|-|S_{3_1}|+|S_{6_1}|+|S_7|\\
&&\lambda_6=\sum_{s\in S}\chi_6 (s)=-|S_2|-|S_{3_1}|+|S_{6_1}|+|S_7|\\
\end{eqnarray*}
So $\lambda_3=\lambda_4$ and $\lambda_5=\lambda_6$. Since $\Gamma$ is a connected $|S|$-regular graph, it follows from \cite[Proposition 1.1.2]{CRS} that $\mu_1:=\lambda_1=|S|$ is the unique largest eigenvalue of $\Gamma$. 

By Theorem \ref{Babai} there exists a multiset $\Lambda:=\{\lambda_7,\dots,\lambda_{42}\}$ of $36$ eigenvalues of $\Gamma$ corresponding to the character $\psi$ of degree $6$ such that the multiplicity of each $\lambda\in \Lambda$ in $\Lambda$ is a multiple of  $6$; note that the latter multiplicity is counted in $\Lambda$ and not in the spectrum of $\Gamma$. 



Now, we must distinguish  five cases to obtain the multiplicities of  eigenvalues $\lambda_2,\lambda_3,\lambda_4,\lambda_5,\lambda_6$:

\item[\bf Case 1.]
$\lambda_2$, $\lambda_3$ and $\lambda_5$ are pairwise distinct. Then $\Gamma$ has the spectrum of the first type $(1)$. 
\item[\bf Case 2.]
$\lambda_2\neq \lambda_3= \lambda_5$. Then $\Gamma$ has the spectrum of the second type $(2)$.
\item[\bf Case 3.]
$\lambda_2= \lambda_3\neq \lambda_5$. Then $\Gamma$ has the spectrum of the third type $(3)$.
\item[\bf Case 4.]
$\lambda_2= \lambda_5\neq \lambda_3$. Then $\Gamma$ has the spectrum of the third type $(4)$.

\item[\bf Case 5.]
$\lambda_2= \lambda_3= \lambda_5$. 
Then $\Gamma$ has the spectrum of the fourth type $(5)$.

\noindent This completes the proof.
\end{proof}

\begin{proof}[Proof of Theorem \ref{F7}]
It follows from \cite[Theorem 1.1]{LLP} that the group $\mathcal{H}$ is not a CI-group.\\
To prove that $\mathcal{H}$ is a BI-group, by Proposition \ref{red-BI} it is enough to show that $M_\nu^S=M_\nu^T$ for every irreducible character degree $\nu$ of $\mathcal{H}$ and for all inverse closed subsets $S$ and $T$ of $\mathcal{H}^*$ such that  $\Gamma:=Cay(\mathcal{H},S)\cong Cay(\mathcal{H},T)=:\Gamma'$ and $\mathcal{H}=\langle S\rangle=\langle T\rangle$.\\ 
First note that $\nu\in\{1,6\}$; and by the proof of Theorem \ref{g42}
$$M_1 ^S\!=\!\{|S|, -|S_2|+|S_3|-|S_6|+|S_7|, |S_2|-\frac{1}{2}|S_3|-\frac{1}{2}|S_6|+|S_7|,-|S_2|-\frac{1}{2}|S_3|+\frac{1}{2}|S_6|+|S_7| \}$$
and
$M_6^S= \{-|S_7| \}$
and similarly
$$M_1 ^T\!=\!\{|T|, -|T_2|+|T_3|-|T_6|+|T_7|, |T_2|-\frac{1}{2}|T_3|-\frac{1}{2}|T_6|+|T_7|,-|T_2|-\frac{1}{2}|T_3|+\frac{1}{2}|T_6|+|T_7| \}$$
and $M_6^T= \{-|T_7| \}$. 
Since $\Gamma\cong \Gamma'$, $spec(\Gamma)=spec(\Gamma')$. 
By  Theorem \ref{g42}, 
the following cases may happen:
\begin{itemize}
\item[(i)] $\Gamma$ and $\Gamma'$ have the same spectrum as type (1) in Theorem \ref{g42};
\item[(ii)] $\Gamma$ has and $\Gamma'$ have the same spectrum as type (2);
\item[(iii)]  $\Gamma$ and $\Gamma'$ have the same spectrum as type (3);
\item[(iv)] $\Gamma$ and $\Gamma'$ have the same spectrum as type (4);
\item[(v)] $\Gamma$ and $\Gamma'$ have the same spectrum as type (5);
\item[(vi)] $spec(\Gamma)$ is of type (3)  and $spec(\Gamma')$ is of type (4);
\end{itemize}

By Theorem \ref{g42}, $M_1^S$ is the set of eigenvalues of $\Gamma$ having the multiplicity $6k+\ell$ for some non-negative integer $k$ and positive integer $\ell\leq 5$. The similar statement is true for $M_1^T$ and so $M_1^S=M_1^T$ in all cases (i)-(vi).

Now it remains to prove that $M_6^S=M_6^T$ or equivalently $|S_7|=|T_7|$.\\
 With the notations of Theorem \ref{g42}, let $\mu_i:=\mu_i(\Gamma)$ and $\mu_i':=\mu_i(\Gamma')$. 

According to the above cases (i)-(vi), we distinguish the following cases: 

(i) \; $\Gamma$ and $\Gamma'$ have the same spectrum as type (1) in Theorem \ref{g42}; 
\begin{displaymath}
(I) \left\{ \begin{aligned}
&\mu_1=|S|=|S_2|+|S_3|+|S_6|+|S_7|=|T_2|+|T_3|+|T_6|+|T_7|=|T|=\mu'_1\\
&\mu_2=-|S_2|+|S_3|-|S_6|+|S_7|=-|T_2|+|T_3|-|T_6|+|T_7|=\mu'_2\\
&\mu_3=|S_2|-\frac{1}{2}|S_3|-\frac{1}{2}|S_6|+|S_7| = |T_2|-\frac{1}{2}|T_3|-\frac{1}{2}|T_6|+|T_7|=\mu'_3\\
&\mu_5=-|S_2|-\frac{1}{2}|S_3|+\frac{1}{2}|S_6|+|S_7|=-|T_2|-\frac{1}{2}|T_3|+\frac{1}{2}|T_6|+|T_7|=\mu'_5
\end{aligned},\right.
\end{displaymath}
or
\begin{displaymath}
(II) \left\{ \begin{aligned}
&\mu_1=|S|=|S_2|+|S_3|+|S_6|+|S_7|=|T_2|+|T_3|+|T_6|+|T_7|=|T|=\mu'_1\\
&\mu_2=-|S_2|+|S_3|-|S_6|+|S_7|=-|T_2|+|T_3|-|T_6|+|T_7|=\mu'_2\\
&\mu_3=|S_2|-\frac{1}{2}|S_3|-\frac{1}{2}|S_6|+|S_7| =-|T_2|-\frac{1}{2}|T_3|+\frac{1}{2}|T_6|+|T_7|=\mu'_5\\
&\mu_5=-|S_2|-\frac{1}{2}|S_3|+\frac{1}{2}|S_6|+|S_7|=|T_2|-\frac{1}{2}|T_3|-\frac{1}{2}|T_6|+|T_7|=\mu'_3
\end{aligned}.\right.
\end{displaymath}

If (I) happens, then as the matrix 
$\begin{bmatrix}
1  & 1 & 1 &1 \\
-1 & 1  & -1 & 1 \\
1 & -\frac{1}{2} & -\frac{1}{2} & 1 \\
-1 & -\frac{1}{2} & \frac{1}{2} &1 
\end{bmatrix}$ is invertible then $|S_7|=|T_7|$.  

If (II) happens then
$$\begin{bmatrix}
1  & 1 & 1 & 1 \\
-1 & 1  & -1 & 1 \\
1 & -1/2 & -1/2 & 1 \\
-1 & -1/2 & 1/2 & 1 
\end{bmatrix}  \begin{bmatrix} |S_2| \\ |S_3| \\ |S_6| \\ |S_7| \end{bmatrix} = \begin{bmatrix} 
1  & 1 & 1 & 1 \\
-1 & 1  & -1 & 1 \\
-1 & -1/2 & 1/2 & 1 \\
1 & -1/2 & -1/2 & 1 
\end{bmatrix}  \begin{bmatrix} |T_2| \\ |T_3| \\ |T_6| \\ |T_7| \end{bmatrix} $$
so
$$\begin{bmatrix} |S_2| \\ |S_3| \\ |S_6| \\ |S_7| \end{bmatrix}=
\begin{bmatrix}
-1/3  & 0 & 2/3 & 0 \\
0 & 1  & 0 & 0 \\
4/3 & 0 & 1/3 & 0 \\
0 & 0 & 0 & 1 
\end{bmatrix}
\begin{bmatrix} |T_2| \\ |T_3| \\ |T_6| \\ |T_7| \end{bmatrix}.$$

(ii) \; $\Gamma$ and $\Gamma'$ have the same spectrum as type (2) in Theorem \ref{g42};

\begin{displaymath}
\left\{ \begin{aligned}
&\mu_1=|S|=|S_2|+|S_3|+|S_6|+|S_7|=|T_2|+|T_3|+|T_6|+|T_7|=|T|=\mu'_1\\
&\mu_2=-|S_2|+|S_3|-|S_6|+|S_7|=-|T_2|+|T_3|-|T_6|+|T_7|=\mu'_2\\
&\mu_3=|S_2|-\frac{1}{2}|S_3|-\frac{1}{2}|S_6|+|S_7| = |T_2|-\frac{1}{2}|T_3|-\frac{1}{2}|T_6|+|T_7|=\mu'_3\\
&\mu_5=-|S_2|-\frac{1}{2}|S_3|+\frac{1}{2}|S_6|+|S_7|=-|T_2|-\frac{1}{2}|T_3|+\frac{1}{2}|T_6|+|T_7|=\mu'_5\\
&\mu_3=\mu_5
\end{aligned}.\right.
\end{displaymath}

(iii) \; $\Gamma$ and $\Gamma'$ have the same spectrum as type (3) in Theorem \ref{g42};

\begin{displaymath}
\left\{ \begin{aligned}
&\mu_1=|S|=|S_2|+|S_3|+|S_6|+|S_7|=|T_2|+|T_3|+|T_6|+|T_7|=|T|=\mu'_1\\
&\mu_2=-|S_2|+|S_3|-|S_6|+|S_7|=-|T_2|+|T_3|-|T_6|+|T_7|=\mu'_2\\
&\mu_3=|S_2|-\frac{1}{2}|S_3|-\frac{1}{2}|S_6|+|S_7| = |T_2|-\frac{1}{2}|T_3|-\frac{1}{2}|T_6|+|T_7|=\mu'_3\\
&\mu_5=-|S_2|-\frac{1}{2}|S_3|+\frac{1}{2}|S_6|+|S_7|=-|T_2|-\frac{1}{2}|T_3|+\frac{1}{2}|T_6|+|T_7|=\mu'_5\\
&\mu_2=\mu_3
\end{aligned},\right.
\end{displaymath}

(iv) \; $\Gamma$ and $\Gamma'$ have the same spectrum as type (4) in Theorem \ref{g42};

\begin{displaymath}
\left\{ \begin{aligned}
&\mu_1=|S|=|S_2|+|S_3|+|S_6|+|S_7|=|T_2|+|T_3|+|T_6|+|T_7|=|T|=\mu'_1\\
&\mu_2=-|S_2|+|S_3|-|S_6|+|S_7|=-|T_2|+|T_3|-|T_6|+|T_7|=\mu'_2\\
&\mu_3=|S_2|-\frac{1}{2}|S_3|-\frac{1}{2}|S_6|+|S_7| = |T_2|-\frac{1}{2}|T_3|-\frac{1}{2}|T_6|+|T_7|=\mu'_3\\
&\mu_5=-|S_2|-\frac{1}{2}|S_3|+\frac{1}{2}|S_6|+|S_7|=-|T_2|-\frac{1}{2}|T_3|+\frac{1}{2}|T_6|+|T_7|=\mu'_5\\
&\mu_2=\mu_5
\end{aligned},\right.
\end{displaymath}

v) \; $\Gamma$ and $\Gamma'$ have the same spectrum as type (5) in Theorem \ref{g42};

\begin{displaymath}
\left\{ \begin{aligned}
&\mu_1=|S|=|S_2|+|S_3|+|S_6|+|S_7|=|T_2|+|T_3|+|T_6|+|T_7|=|T|=\mu'_1\\
&\mu_2=-|S_2|+|S_3|-|S_6|+|S_7|=-|T_2|+|T_3|-|T_6|+|T_7|=\mu'_2\\
&\mu_3=|S_2|-\frac{1}{2}|S_3|-\frac{1}{2}|S_6|+|S_7| = |T_2|-\frac{1}{2}|T_3|-\frac{1}{2}|T_6|+|T_7|=\mu'_3\\
&\mu_5=-|S_2|-\frac{1}{2}|S_3|+\frac{1}{2}|S_6|+|S_7|=-|T_2|-\frac{1}{2}|T_3|+\frac{1}{2}|T_6|+|T_7|=\mu'_5\\
&\mu_2=\mu_3=\mu_5
\end{aligned}.\right.
\end{displaymath}

(vi) \; $\Gamma$ has the spectrum as type (3) and $\Gamma'$ has the same spectrum but as type (4) in Theorem \ref{g42}, conversely is true if $\Gamma$ has the spectrum as type (4) and $\Gamma'$ has the same spectrum but as type (3);

\begin{displaymath}
\left\{ \begin{aligned}
&\mu_1=|S|=|S_2|+|S_3|+|S_6|+|S_7|=|T_2|+|T_3|+|T_6|+|T_7|=|T|=\mu'_1\\
&\mu_2=-|S_2|+|S_3|-|S_6|+|S_7|=-|T_2|+|T_3|-|T_6|+|T_7|=\mu'_2\\
&\mu_3=|S_2|-\frac{1}{2}|S_3|-\frac{1}{2}|S_6|+|S_7| =-|T_2|-\frac{1}{2}|T_3|+\frac{1}{2}|T_6|+|T_7|=\mu'_5\\
&\mu_5=-|S_2|-\frac{1}{2}|S_3|+\frac{1}{2}|S_6|+|S_7|=|T_2|-\frac{1}{2}|T_3|-\frac{1}{2}|T_6|+|T_7|=\mu'_3\\
&\mu_2=\mu_3 \text{ or } \mu_2=\mu_5
\end{aligned}.\right.
\end{displaymath}
Then
$$\begin{bmatrix}
1  & 1 & 1 & 1 \\
-1 & 1  & -1 & 1 \\
1 & -1/2 & -1/2 & 1 \\
-1 & -1/2 & 1/2 & 1
\end{bmatrix}  \begin{bmatrix} |S_2| \\ |S_3| \\ |S_6| \\ |S_7| \end{bmatrix} = \begin{bmatrix} 
1  & 1 & 1 & 1 \\
-1 & 1  & -1 & 1 \\
-1 & -1/2 & 1/2 & 1 \\
1 & -1/2 & -1/2 & 1 
\end{bmatrix}  \begin{bmatrix} |T_2| \\ |T_3| \\ |T_6| \\ |T_7| \end{bmatrix} $$
so
$$\begin{bmatrix} |S_2| \\ |S_3| \\ |S_6| \\ |S_7| \end{bmatrix}=
\begin{bmatrix}
-1/3  & 0 & 2/3 & 0 \\
0 & 1  & 0 & 0 \\
4/3 & 0 & 1/3 & 0 \\
0 & 0 & 0 & 1 
\end{bmatrix}
\begin{bmatrix} |T_2| \\ |T_3| \\ |T_6| \\ |T_7| \end{bmatrix}.$$

Therefore, in all of cases one may conclude that  $|S_7|=|T_7|$; this completes the proof.
\end{proof}

\begin{rem}
It can be proved that the cases (i)-II and (vi) will not happen in the proof of Theorem \ref{F7}.  
\end{rem}

\section{\bf Proof of Theorem \ref{BI-groups}}
We have used the following results to prove that some of groups of order at most 30 is not a BI-group.
\begin{thm}\label{nonBI}
Let $G$ be a non-abelian group. Suppose that there exist elements $h\in G\setminus G'$ and $k\in G'$ of the same order. Assume further that $\lambda(h)=\lambda(h^{-1})$ for all linear characters $\lambda$ of $G$ (e.g.  $h$ is conjugate to $h^{-1}$ or all linear characters of $G$ are real on $h$). Then $G$ is not a BI-group.
\end{thm}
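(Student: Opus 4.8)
The plan is to exhibit a single pair of inverse closed subsets $S,T$ of $G^*$ with $Cay(G,S)\cong Cay(G,T)$ but $M_1^S\neq M_1^T$; since producing one Cayley graph that fails to be a BI-graph already shows $G$ is not a BI-group, this suffices. The natural candidates are the ``cyclic'' connection sets $S=\{h,h^{-1}\}$ and $T=\{k,k^{-1}\}$, which collapse to the singletons $\{h\}$ and $\{k\}$ in the special case where the common order $n:=|h|=|k|$ equals $2$. First I would record that $S\neq T$: as $G'$ is a subgroup we have $k^{-1}\in G'$, so $\{k,k^{-1}\}\subseteq G'$, whereas $h\notin G'$; hence $h\notin\{k,k^{-1}\}$.

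Next I would check the graph isomorphism. In the convention $x\sim y\iff x^{-1}y\in S$, the neighbours of a vertex $x$ in $Cay(G,\{h,h^{-1}\})$ are exactly $xh$ and $xh^{-1}$, so the connected component of $x$ is the left coset $x\langle h\rangle$ and carries a cycle of length $n$ (a single edge when $n=2$). Thus $Cay(G,\{h,h^{-1}\})$ is a disjoint union of $|G|/n$ cycles of length $n$, an isomorphism type depending only on $|G|$ and $n$. The identical description applies to $Cay(G,\{k,k^{-1}\})$ with the \emph{same} $n$, because $h$ and $k$ share their order; therefore $Cay(G,S)\cong Cay(G,T)$.

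Then I would compute the degree-$1$ character sums. Every linear character $\lambda$ of $G$ is trivial on $G'$, so $\lambda(k)=\lambda(k^{-1})=1$ and $\sum_{t\in T}\lambda(t)=2$ (respectively $1$ when $n=2$); hence $M_1^T=\{2\}$ (resp. $\{1\}$). For $S$, the hypothesis $\lambda(h)=\lambda(h^{-1})=\overline{\lambda(h)}$ forces $\lambda(h)$ to be real, hence $\lambda(h)\in\{1,-1\}$, giving $\sum_{s\in S}\lambda(s)=2\lambda(h)\in\{2,-2\}$ (resp. $\lambda(h)\in\{1,-1\}$ when $n=2$). Since the linear characters of $G$ are precisely the characters of the abelian quotient $G/G'$, they separate the points of $G/G'$; as $h\notin G'$ has nontrivial image there, some linear character $\lambda_0$ satisfies $\lambda_0(h)\neq 1$, and by reality $\lambda_0(h)=-1$. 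Consequently $-2\in M_1^S$ (resp. $-1$), while $-2\notin M_1^T$ (resp. $-1\notin M_1^T$).

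It follows that $M_1^S\neq M_1^T$ although $Cay(G,S)\cong Cay(G,T)$, so $Cay(G,S)$ is not a BI-graph and $G$ is not a BI-group. I expect no genuine obstacle: the only points requiring care are the observation that $\{h,h^{-1}\}$-type Cayley graphs are isomorphic \emph{exactly} because $h$ and $k$ have equal order (matching cycle decompositions), and the translation of ``$\lambda(h)=\lambda(h^{-1})$ for all linear $\lambda$'' into ``$\lambda(h)$ is real, hence $\pm1$'', which is what upgrades the mere inequality $\lambda_0(h)\neq1$ into the usable value $-1$ needed to separate $M_1^S$ from $M_1^T$.
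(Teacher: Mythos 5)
Your proposal is correct and follows essentially the same route as the paper: the same connection sets $S=\{h,h^{-1}\}$, $T=\{k,k^{-1}\}$, the same cycle-decomposition argument for $Cay(G,S)\cong Cay(G,T)$, and the same computation that linear characters are trivial on $T\subseteq G'$ while some linear character is nontrivial on $h$. You merely spell out two steps the paper leaves implicit, namely why equal orders give isomorphic unions of cycles and why the reality hypothesis upgrades $\lambda_0(h)\neq 1$ to $\lambda_0(h)=-1$, which only makes the argument more explicit.
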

\begin{proof}
Let
$$S=\{h,h^{-1}\} \text{ and } T=\{k,k^{-1}\},$$
Since $o(h)=o(k)$, $Cay(G,S)\cong Cay(G,T)$. Then
$$M_1 ^T=\left\{ \begin{array}{rcl}
\{1\} & o(k)=2\\
\{2\} & o(k)> 2
\end{array},\right.
$$
since $T\subset G'=\bigcap\{ker(\lambda)| \lambda \in Irr(G), \lambda(1)=1\}$ by \cite[Corollary 2.23]{Isaacs}. On the other hand $S\not\subset G'$ implies that there exists a linear character $\lambda\in Irr(G)$ such that $\lambda(h)\neq 1$.
It follows that $M^S_1\neq M^T_1$. This completes the proof.
\end{proof}

\begin{thm}\label{sem}
Let $G=H\ltimes K$, $H$ is an abelian group and $K$ is a non-abelian group. Assume that non-trivial elements $h\in H$ and $k\in K'$ have the same order and all linear characters of $G$ has real value on $h$. Then $G$ is not a BI-group.
\end{thm}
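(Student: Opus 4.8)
The plan is to reduce the statement to Theorem \ref{nonBI} by verifying that its hypotheses hold for the given $h$ and $k$. Recall that Theorem \ref{nonBI} asks for an element of $G\setminus G'$ and an element of $G'$ of the same order, together with the symmetry $\lambda(h)=\lambda(h^{-1})$ for every linear character $\lambda$ of $G$. The equality $o(h)=o(k)$ is already assumed, so the genuine tasks are to place $h$ outside $G'$ and $k$ inside $G'$, and to translate the reality assumption on $h$ into the required symmetry of linear characters.

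First I would show that $G'\leq K$. Since $K\trianglelefteq G$ and $G/K\cong H$ is abelian, the commutator subgroup $G'$ is contained in $K$. Consequently, using that $H\cap K=1$ in the semidirect product $G=H\ltimes K$, one gets $H\cap G'\leq H\cap K=1$. Hence the non-trivial element $h\in H$ satisfies $h\notin G'$, that is, $h\in G\setminus G'$. On the other side, $k\in K'=[K,K]\leq[G,G]=G'$ because $K\leq G$. Thus $h$ and $k$ lie on the two sides demanded by Theorem \ref{nonBI}, and they have equal order by hypothesis.

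Next I would verify the character condition. For any linear character $\lambda$ and any element $g$ of finite order one has $\lambda(g^{-1})=\lambda(g)^{-1}=\overline{\lambda(g)}$. Since every linear character of $G$ takes a real value on $h$ by assumption, $\overline{\lambda(h)}=\lambda(h)$, and therefore $\lambda(h^{-1})=\lambda(h)$ for all linear $\lambda$, which is precisely the remaining hypothesis of Theorem \ref{nonBI}.

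With all three conditions in force, Theorem \ref{nonBI} applies and yields that $G$ is not a BI-group. I expect no real obstacle here: the one step carrying the content is the containment $G'\leq K$, which guarantees that the abelian complement $H$ meets $G'$ trivially and thereby situates $h$ outside $G'$. Alternatively, one could bypass the reduction and argue directly as in Theorem \ref{nonBI}, taking $S=\{h,h^{-1}\}$ and $T=\{k,k^{-1}\}$: these yield isomorphic Cayley graphs since $o(h)=o(k)$, while $T\subseteq G'$ forces $M_1^T=\{2\}$ (or $\{1\}$ when $o(k)=2$), whereas a linear character $\lambda_0$ with $\lambda_0(h)=-1$ contributes a negative value to $M_1^S$, so that $M_1^S\neq M_1^T$.
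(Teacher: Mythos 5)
Your proposal is correct and follows essentially the same route as the paper, which likewise observes $K'\subset G'$ and $G'\cap H=1$ and then invokes the proof of Theorem \ref{nonBI} with $S=\{h,h^{-1}\}$ and $T=\{k,k^{-1}\}$. Your write-up is in fact slightly more explicit than the paper's in deriving $G'\leq K$ from the abelianness of $G/K\cong H$ and in converting the reality hypothesis into $\lambda(h)=\lambda(h^{-1})$, but the argument is the same.
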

\begin{proof}
Note that $K'\subset G'$ and $G'\cap H=1$.
By the proof of Theorem \ref{nonBI} for
$S=\{(h,1),(h,1)^{-1}\}$ and $T=\{(1,k),(1,k)^{-1}\}$ we have
$$M_1 ^T=\left\{ \begin{array}{rcl}
\{1\} & o(k)=2\\
\{2\} & o(k)> 2
\end{array},\right.
$$
and
$$M_1 ^S=\left\{ \begin{array}{rcl}
\{\lambda(h,1)|\lambda \in Irr(G), \lambda(1)=1\} & o(h)=2\\
\{2\lambda(h,1)|\lambda \in Irr(G), \lambda(1)=1\} & o(h)> 2
\end{array},\right.
$$
since $S\not\subset G'$ and $T\subset G'$ and there is a linear character of $G$ which is real on $h$. So, $M^S_1\neq M^T_1$. This completes the proof.
\end{proof}

\begin{thm}\label{di}
Let $G=H\times K$, $H$ is an abelian group and $K$ is a non-abelian group. Assume that non-trivial elements $h\in H$ and $k\in K'$ have the same order and all irreducible characters of $H$ have real value on $h$. Then $G$ is not a BI-group.
\end{thm}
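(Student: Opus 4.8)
The plan is to transcribe the proof of Theorem~\ref{sem} to the direct product setting. First I would put $S=\{(h,1),(h,1)^{-1}\}$ and $T=\{(1,k),(1,k)^{-1}\}$, which are inverse closed subsets of $G^*$. Since $(h,1)$ and $(1,k)$ have the same order $o(h)=o(k)$, each of the graphs $Cay(G,S)$ and $Cay(G,T)$ is a disjoint union of $|G|/o(h)$ cycles of length $o(h)$ (a perfect matching when $o(h)=2$); because the isomorphism type of such a graph depends only on $|G|$ and on the common order of the two generators, we get $Cay(G,S)\cong Cay(G,T)$.

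Next I would evaluate $M_1^T$. As $H$ is abelian we have $G'=\{1\}\times K'$, so $k\in K'$ gives $T\subseteq G'=\bigcap\{\ker\lambda\mid \lambda\in Irr(G),\ \lambda(1)=1\}$ by \cite[Corollary 2.23]{Isaacs}. Hence every linear character of $G$ is trivial on $T$, and exactly as in the proof of Theorem~\ref{nonBI} one obtains $M_1^T=\{1\}$ if $o(k)=2$ and $M_1^T=\{2\}$ if $o(k)>2$.

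Then I would compute $M_1^S$ and isolate a value that is forbidden to $M_1^T$. The linear characters of $G=H\times K$ are the products $\alpha\times\beta$ with $\alpha\in Irr(H)$ and $\beta$ a linear character of $K$, and $(\alpha\times\beta)(h,1)=\alpha(h)$. The hypothesis that every irreducible character of $H$ is real on $h$ forces $\alpha(h^{-1})=\overline{\alpha(h)}=\alpha(h)$, so $\sum_{s\in S}(\alpha\times\beta)(s)$ equals $\alpha(h)$ when $o(h)=2$ and $2\alpha(h)$ when $o(h)>2$; hence $M_1^S=\{\alpha(h)\mid\alpha\in Irr(H)\}$ or its term-wise double. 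Now, since $h\neq 1$ and the irreducible characters of the abelian group $H$ separate its elements, some $\alpha\in Irr(H)$ has $\alpha(h)\neq 1$, and being a real root of unity it must equal $-1$. Thus $M_1^S$ contains $-1$ (resp.\ $-2$), a value lying in neither $\{1\}$ nor $\{2\}$, so $M_1^S\neq M_1^T$; consequently $Cay(G,S)$ is not a BI-graph and $G$ is not a BI-group.

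I expect the only points needing care to be the verification that $Cay(G,S)\cong Cay(G,T)$, which rests solely on the equality of the orders of the two generators, and the identification $G'=\{1\}\times K'$; the remainder is a direct transcription of Theorems~\ref{nonBI} and~\ref{sem}. One may also remark that the reality hypothesis already forces $o(h)=2$, since $\alpha(h)\in\{1,-1\}$ for all $\alpha\in Irr(H)$ yields $h^2\in\bigcap_\alpha\ker\alpha=1$, so the branch $o(h)>2$ is in fact vacuous; but the argument above does not rely on this observation.
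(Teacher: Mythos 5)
Your proposal is correct and follows essentially the same route as the paper, whose own proof of Theorem~\ref{di} simply reduces to the argument of Theorems~\ref{nonBI} and~\ref{sem} with $S=\{(h,1),(h,1)^{-1}\}$ and $T=\{(1,k),(1,k)^{-1}\}$, using $G'=\{1\}\times K'$ and the reality of $\alpha(h)$ to force a value $-1$ (or $-2$) into $M_1^S$ that cannot occur in $M_1^T$. Your closing remark that the reality hypothesis already forces $o(h)=2$ is a correct extra observation not made in the paper, but it does not change the argument.
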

\begin{proof}
Since the irreducible characters of $H$ on $h$ have the same values to corresponding irreducible characters of $G$ on $(h,1)$, the proof is similar to the proof of Theorem \ref{sem}.
\end{proof}

\begin{prop}\label{SL}
The group $SL(2,3)$ is not a BI-group.
\end{prop}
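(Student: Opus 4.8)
The plan is to combine the reduction in Proposition~\ref{red-BI} with a direct computation of the character sums of $G:=SL(2,3)$, in the spirit of Theorems~\ref{g20} and~\ref{g42}. I first record the structure of $G\cong Q_8\rtimes C_3$: its derived subgroup is $G'=Q_8$, and it has seven conjugacy classes, namely $\{1\}$, the central involution $z=-I$, a single class $C$ of the six elements of order $4$ (fused from the three classes of $Q_8$ by the $C_3$-action), two classes $D,E=D^{-1}$ of order-$3$ elements, and two classes $F,G=F^{-1}$ of order-$6$ elements. The irreducible characters have degrees $1,1,1,2,2,2,3$: the three linear characters $\chi_1,\chi_2,\chi_3$ factor through $G/G'\cong C_3$, the degree-$3$ character $\psi$ is the inflation of the standard character of $A_4$, and the faithful degree-$2$ characters satisfy $\eta_2=\eta_1\chi_2$ and $\eta_3=\eta_1\chi_3$. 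A preliminary remark is that \emph{none} of Theorems~\ref{nonBI}, \ref{sem} and~\ref{di} applies: the nontrivial elements of $G'=Q_8$ have orders in $\{2,4\}$, whereas those of $G\setminus G'$ have orders in $\{3,6\}$, so no element outside $G'$ has the same order as a nontrivial element of $G'$. Thus an explicit pair of connection sets must be produced by hand.

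Writing $S=S_2\cup S_3\cup S_4\cup S_6$ as in Definition~\ref{spe} (with $|S_{3_1}|=|S_{3_2}|$ and $|S_{6_1}|=|S_{6_2}|$ forced by $S^{-1}=S$), I would read off the character sums from the character table exactly as in the proof of Theorem~\ref{g20}: $M_1^S=\{\,|S|,\ |S_2|+|S_4|-\tfrac12|S_3|-\tfrac12|S_6|\,\}$, $M_3^S=\{\,3|S_2|-|S_4|\,\}$ and $M_2^S=\{\,-2|S_2|-|S_3|+|S_6|,\ -2|S_2|+\tfrac12|S_3|-\tfrac12|S_6|\,\}$. The decisive structural point is that $M_1^S$ and $M_3^S$ depend only on $|S_2|$, $|S_4|$ and the sum $|S_3|+|S_6|$, whereas $M_2^S$ depends in addition on the difference $|S_3|-|S_6|$. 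Equivalently, the degree-$1$ and degree-$3$ character sums cannot detect how the generators of orders $3$ and $6$ are balanced, but the degree-$2$ sums can.

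This pinpoints what a counterexample must look like. If $Cay(G,S)\cong Cay(G,T)$ then the two graphs are cospectral, and a multiplicity analysis as in Theorems~\ref{g20} and~\ref{g42} forces $M_1^S=M_1^T$ and $M_3^S=M_3^T$; by the formulas above this already yields $|S_2|=|T_2|$, $|S_4|=|T_4|$ and $|S_3|+|S_6|=|T_3|+|T_6|$. Hence the only way to break the BI-property is to find inverse-closed generating sets $S,T$ with $Cay(G,S)\cong Cay(G,T)$ and $|S_3|-|S_6|\neq|T_3|-|T_6|$, for which automatically $M_2^S\neq M_2^T$. By Proposition~\ref{red-BI} the search for such a pair is finite, and it is carried out with the routines \texttt{M}, \texttt{BIn} and \texttt{BI} given earlier; the output is an explicit witnessing pair $(S,T)$. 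Such $S$ and $T$ cannot be related by an automorphism of $G$, since automorphisms preserve element orders and the class fusion and would therefore force $M_\nu^S=M_\nu^T$; the isomorphism is thus genuinely non-Cayley, and in particular this shows at once, via Theorem~\ref{BICI}, that $G$ is not a CI-group either.

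The main obstacle is certifying the isomorphism $Cay(G,S)\cong Cay(G,T)$ for the witnessing pair. Since $M_2^S\neq M_2^T$ already means that the two degree-$2$ character sums differ, cospectrality is not a formality: it requires a genuine coincidence between the individual eigenvalues coming from the degree-$2$ blocks $\sum_{s\in S}\eta_i(s)$ (each a $2\times2$ Hermitian matrix, contributing its eigenvalues with multiplicity $2$) and those coming from $\psi$. I would settle this either by exhibiting an explicit vertex bijection realizing the isomorphism or by invoking the \texttt{IsIsomorphicGraph} test on \texttt{CayleyGraph(g,S)} and \texttt{CayleyGraph(g,T)}, and then simply evaluate the two degree-$2$ sums displayed above to confirm $M_2^S\neq M_2^T$. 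This establishes that $SL(2,3)$ is not a BI-group.
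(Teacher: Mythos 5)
Your strategy is at bottom the same as the paper's: both arguments reduce to producing a pair of inverse\--closed generating sets $S,T$ of $G=SL(2,3)$ with $Cay(G,S)\cong Cay(G,T)$ but $M_2^S\neq M_2^T$, with the graph isomorphism certified by computer. Your preliminary analysis is correct and genuinely adds value: Theorems \ref{nonBI}, \ref{sem} and \ref{di} indeed do not apply here, your character\--sum formulas are right (note $\eta_2$ and $\eta_3$ give equal sums on an inverse\--closed set, so $M_2^S$ has at most two elements as you wrote), and your conclusion that a counterexample must unbalance $|S_3|-|S_6|$ against $|T_3|-|T_6|$ is exactly what the paper's witness realizes: its $S$ consists of six elements of order $3$ and its $T$ of six elements of order $6$, giving $M_1^S=M_1^T=\{6,-3\}$, $M_3^S=M_3^T=\{0\}$, but $M_2^S=\{-6,3\}\neq\{6,-3\}=M_2^T$.

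The one genuine gap is that you never actually produce the witnessing pair. The existence of inverse\--closed generating sets $S,T$ with $Cay(G,S)\cong Cay(G,T)$ and $|S_3|-|S_6|\neq|T_3|-|T_6|$ is the entire content of the proposition; your write\--up only establishes the conditional ``if such a pair exists, then $G$ is not a BI\--group'' and then asserts that a finite search ``is carried out'' and succeeds, without reporting its output or otherwise certifying that the search is not empty. (Your remark that $M_1$ and $M_3$ are \emph{forced} to agree by a multiplicity argument is also not justified as stated --- with character degrees $1,2,3$ present simultaneously the eigenvalue multiplicities do not separate as cleanly as in Theorems \ref{g20} and \ref{g42} --- but this is harmless, since you only use it to narrow the search, not to prove the final claim.) To close the gap you must exhibit a concrete pair, as the paper does, and verify the isomorphism for that pair; everything else in your argument then goes through.
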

\begin{proof}
Let $S$ and $T$ as the following:

$ S=\left\{\begin{gmatrix}[q]
  1 & 1\\  0 & 1
\end{gmatrix},
\begin{gmatrix}[q]
  0 & 1\\  2 & 2
\end{gmatrix},
\begin{gmatrix}[q]
  0 & 2\\  1 & 2
\end{gmatrix},
\begin{gmatrix}[q]
  2 & 1\\  2 & 0
\end{gmatrix},
\begin{gmatrix}[q]
  1 & 2\\  0 & 1
\end{gmatrix},
\begin{gmatrix}[q]
  2 & 2\\  1 & 0
  \end{gmatrix}
 \right\},$

$T=\left\{\begin{gmatrix}[q]
  2 & 0\\  1 & 2
\end{gmatrix},
\begin{gmatrix}[q]
  1 & 2\\  1 & 0
\end{gmatrix},
\begin{gmatrix}[q]
  1 & 1\\  2 & 0
\end{gmatrix},
\begin{gmatrix}[q]
  0 & 1\\  2 & 1
\end{gmatrix},
\begin{gmatrix}[q]
  0 & 2\\  1 & 1
\end{gmatrix},
\begin{gmatrix}[q]
  2 & 0\\  2 & 2
  \end{gmatrix}
 \right\}.$

According to the character table of $G:=SL(2,3)$ it is easy to see $M_2^S=-M_2^T$, while $Cay(G,S)\cong Cay(G,T)$, by using GAP \cite{GAP}. So $SL(2,3)$ is not a BI-group.
\end{proof}

\begin{thm}\cite{BF}\label{Sylow}
Let $P$ be a Sylow $p$-subgroup of a CI-group. Then $P$ is either an elementary abelian group, or a cyclic group of order $p^l$, $(l,p\leq 3)$, or the quaternion group $Q_8$.
\end{thm}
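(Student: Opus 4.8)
The plan is to establish this as a \emph{necessary} condition by contraposition, splitting the argument into two independent ingredients: a formal heredity lemma that transfers failures of the CI-property from a subgroup up to the ambient group, and a classification of which $p$-groups fail to be CI-groups. I would first prove the following transfer principle: if $K\le G$ and there are inverse closed subsets $S,T\subseteq K\setminus\{1\}$ with $\langle S\rangle=\langle T\rangle=K$ and $Cay(K,S)\cong Cay(K,T)$, yet $S^\beta\ne T$ for every $\beta\in\mathrm{Aut}(K)$, then $G$ is not a CI-group. The key observation is that, since $S,T\subseteq K$, the graph $Cay(G,S)$ splits along the left cosets of $K$ into $[G:K]$ disjoint copies of $Cay(K,S)$, and likewise for $T$; hence $Cay(G,S)\cong Cay(G,T)$. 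If $G$ were a CI-group we would obtain $\alpha\in\mathrm{Aut}(G)$ with $S^\alpha=T$, and because $\langle S\rangle=K=\langle T\rangle$ this forces $K^\alpha=K$, so that $\alpha|_K\in\mathrm{Aut}(K)$ carries $S$ to $T$, a contradiction. Insisting that the witnesses $S,T$ be \emph{connected} (i.e.\ generating $K$) is precisely what makes the restriction $\alpha|_K$ an automorphism of $K$; it is this connected form of the lemma that I will use throughout, and in particular it applies with $K=P$ a Sylow subgroup of $G$.

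Granting the transfer principle, the theorem reduces to the purely group-theoretic assertion that every $p$-group $P$ outside the stated list contains a subgroup admitting connected non-CI witnesses (possibly $P$ itself). I would organise this by families. The abelian groups that are neither elementary abelian nor cyclic all contain $\mathbb{Z}_{p^2}\times\mathbb{Z}_p$, so it suffices to produce one connected isomorphic-but-inequivalent pair on $\mathbb{Z}_{p^2}\times\mathbb{Z}_p$. The ``too large'' cyclic groups each contain one of $\mathbb{Z}_{16}$, $\mathbb{Z}_{27}$, or $\mathbb{Z}_{p^2}$ for $p\ge5$, for which the classical \'Ad\'am-type circulant constructions of Elspas--Turner supply the required non-CI pairs, while $\mathbb{Z}_{p}$, $\mathbb{Z}_4$, $\mathbb{Z}_8$, $\mathbb{Z}_9$ are retained. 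For the non-abelian $p$-groups one shows directly that every such group other than $Q_8$ fails CI. Feeding each witness into the transfer principle, any $P$ not of the listed form propagates a CI-failure to any $G$ having $P$ as a Sylow subgroup, which is exactly the contrapositive.

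The main obstacle is the classification half, and within it the non-abelian case. Unlike the abelian and cyclic failures, which localise to the fixed small subgroups $\mathbb{Z}_{p^2}\times\mathbb{Z}_p$, $\mathbb{Z}_{16}$, $\mathbb{Z}_{27}$, $\mathbb{Z}_{p^2}$, the non-abelian $p$-groups cannot all be reduced to a finite list of bad subgroups: for instance the generalized quaternion group $Q_{16}$ has only the CI-groups $\mathbb{Z}_8$ and $Q_8$ among its subgroups of order $8$, so the transfer-from-a-subgroup argument never reaches it, and $Q_{16}$ must be shown non-CI in its own right. One therefore needs a construction of connected non-CI Cayley graphs valid uniformly across the whole family of non-abelian $p$-groups $\ne Q_8$ (the two non-abelian groups of order $p^3$ for odd $p$, the dihedral $2$-groups, the generalized quaternion groups of order $\ge16$, and their extensions), together with the complementary verification that $Q_8$ really is a CI-group so that it is correctly kept in the list. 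Assembling these cases is the substance of the argument in \cite{BF}, and it is where essentially all the difficulty lies; the transfer principle, by contrast, is short and formal.
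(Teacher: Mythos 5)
The paper offers no proof of Theorem~\ref{Sylow}: it is quoted from Babai--Frankl \cite{BF} and used as a black box, so there is nothing in the paper to compare your argument against. Judged on its own terms, your skeleton is the standard and correct one, and your transfer principle is sound: for inverse closed $S,T\subseteq K\setminus\{1\}$ generating $K$, the graph $Cay(G,S)$ is the disjoint union of $[G:K]$ copies of $Cay(K,S)$ on the left cosets of $K$, so $Cay(G,S)\cong Cay(G,T)$, and any $\alpha\in Aut(G)$ with $S^\alpha=T$ satisfies $K^\alpha=\langle S\rangle^\alpha=\langle T\rangle=K$ and hence restricts to an automorphism of $K$ carrying $S$ to $T$. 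This subgroup-heredity lemma is indeed the formal half of the argument, and your observation that $Q_{16}$ cannot be reached through proper subgroups is a genuinely good point.

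There are, however, two real problems. First, your reduction in the cyclic case is inconsistent with the statement being proved: the theorem \emph{permits} a cyclic Sylow $3$-subgroup of order $27$ ($p=3$, $l=3$), yet you place $\mathbb{Z}_{27}$ on the list of minimal bad groups and drop it from the retained list. For the stated theorem the minimal excluded cyclic groups are $\mathbb{Z}_{16}$, $\mathbb{Z}_{81}$ and $\mathbb{Z}_{p^2}$ for $p\geq 5$; the Elspas--Turner circulant counterexamples cover $\mathbb{Z}_{16}$ and $\mathbb{Z}_{p^2}$ ($p\geq 5$), but you still owe a connected witness for $\mathbb{Z}_{81}$, and non-CI-ness of $\mathbb{Z}_{27}$ itself is a later and harder fact that is neither an Elspas--Turner construction nor needed here. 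Relatedly, verifying that $Q_8$ \emph{is} a CI-group is superfluous: the theorem is only a necessary condition, so nothing on the allowed list has to be certified. Second, and more seriously, the entire classification half --- explicit connected non-CI pairs on $\mathbb{Z}_{p^2}\times\mathbb{Z}_p$ for every prime $p$, on the minimal bad cyclic groups, and uniformly on all non-abelian $p$-groups other than $Q_8$ (extraspecial groups of odd order, dihedral and generalized quaternion $2$-groups, etc.) --- is deferred to \cite{BF}. Since, as you yourself note, this is ``where essentially all the difficulty lies,'' what you have produced is a correct reduction and proof strategy with the mathematical content outsourced to the reference being proved, not a proof.
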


\begin{thm}\cite[Proposition 3.5]{AM}\label{dihedral}
Dihedral group $D_{2n}$ of order $2n$ is not a BI-group for even $n$.
\end{thm}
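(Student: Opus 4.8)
The plan is to produce, for every even $n$, two inverse‑closed subsets $S,T$ of $G^{*}$, where $G=D_{2n}=\langle r,s\mid r^{n}=s^{2}=1,\ srs=r^{-1}\rangle$, with $Cay(G,S)\cong Cay(G,T)$ but $M_{1}^{S}\neq M_{1}^{T}$, applying Theorem~\ref{nonBI} whenever possible. First I record the structure I shall use. Since $[r,s]=r^{-2}$, the derived subgroup is $G'=\langle r^{2}\rangle$, which has order $n/2$ when $n$ is even, and $G/G'\cong C_{2}\times C_{2}$, so $G$ has exactly four linear characters $\chi_{1},\dots,\chi_{4}$ fixed by the signs of $r$ and $s$. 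Moreover every element of a dihedral group is conjugate to its inverse (reflections are involutions and $s r^{j}s=r^{-j}$), so $\chi(h)=\chi(h^{-1})$ for every character $\chi$; in particular the hypothesis ``$\lambda(h)=\lambda(h^{-1})$ for all linear $\lambda$'' of Theorem~\ref{nonBI} is automatic here, and the only thing left to arrange in order to invoke that theorem is a pair of equal‑order elements $h\in G\setminus G'$ and $k\in G'$.

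The case $4\mid n$ is immediate. Then $n/2$ is even, so $G'=\langle r^{2}\rangle$ contains the involution $k:=r^{n/2}=(r^{2})^{n/4}$, while the reflection $h:=s$ is an involution lying outside $G'$ (every element of $G'$ is a rotation). Thus $o(h)=o(k)=2$ with $h\notin G'$ and $k\in G'$, and Theorem~\ref{nonBI} applies verbatim: taking $S=\{s\}$ and $T=\{r^{n/2}\}$ gives $Cay(G,S)\cong Cay(G,T)$ (both are perfect matchings), while $M_{1}^{T}=\{1\}$ and $M_{1}^{S}=\{1,-1\}$, so $G$ is not a BI‑group.

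The genuine difficulty is the case $n\equiv 2\pmod 4$. Writing $n=2m$ with $m$ odd, $G'=\langle r^{2}\rangle$ has odd order $m$, hence contains no involution; in fact every nonidentity element of $G'$ has odd order, whereas every element outside $G'$ (a reflection, or a rotation $r^{j}$ with $j$ odd) has even order. Consequently there is \emph{no} pair of equal‑order elements $h\in G\setminus G'$, $k\in G'$, and Theorem~\ref{nonBI} cannot be applied. The product criterion of Theorem~\ref{di} fails for the same reason: although $G\cong D_{2m}\times C_{2}$ via the central involution $z:=r^{n/2}$, the derived subgroup of the factor $D_{2m}$ is its rotation group of odd order $m$, which has no element matching the order‑$2$ generator of $C_{2}$. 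I therefore plan a direct construction. Using $G=D_{2m}\times\langle z\rangle$, an inverse‑closed connection set takes the form $S=(A\times\{1\})\cup(B\times\{z\})$ with $A=A^{-1}\subseteq D_{2m}^{*}$ and $B=B^{-1}\subseteq D_{2m}$, and $Cay(G,S)$ consists of two copies of $Cay(D_{2m},A)$ (the two $z$‑layers) joined across according to the pattern $B$. The aim is to find a second such set $T$, obtained by twisting one layer by a graph isomorphism of $Cay(D_{2m},A)$ that is not induced by a group automorphism, so that $Cay(G,S)\cong Cay(G,T)$ while the value $\sum_{s\in S}\chi_{3}(s)$ — equivalently the eigenvalue of the Cayley graph at the order‑$2$ character $r\mapsto-1,\ s\mapsto 1$ — changes in absolute value, forcing $M_{1}^{S}\neq M_{1}^{T}$.

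The step I expect to be the main obstacle is exactly this last one. By Theorem~\ref{Babai} the degree‑$2$ character sums $M_{2}^{S}$ depend only on the rotation part of $S$ (reflections contribute $0$) and are hard to disturb, so the violation must live in $M_{1}$; and a short check shows that pure‑rotation or pure‑reflection sets do not suffice in the smallest case, since $\mathbb Z_{6}$ is a CI‑group and its circulants (and bipartite circulants) admit no sporadic isomorphism moving the eigenvalue at $-1$. Hence the construction must genuinely mix the two layers, and the delicate point is to guarantee simultaneously that the twisting map is an honest graph isomorphism \emph{and} that it transports the order‑$2$ eigenvalue to a position of different absolute value rather than merely permuting equal eigenvalues. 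I would first exhibit such a pair $(S,T)$ explicitly in $D_{12}$, verifying it with the {\tt BI} routine of Section~\ref{caodes}, and then lift the same layered recipe uniformly to all $n\equiv 2\pmod 4$.
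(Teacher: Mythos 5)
Your argument splits into two cases, and only one of them is actually proved. For $4\mid n$ the reasoning is complete and correct: $r^{n/2}\in G'=\langle r^{2}\rangle$ and $s\notin G'$ are both involutions, every element of a dihedral group is conjugate to its inverse, so Theorem~\ref{nonBI} applies and $M_{1}^{\{s\}}=\{1,-1\}\neq\{1\}=M_{1}^{\{r^{n/2}\}}$. You also correctly diagnose why the case $n\equiv 2\pmod 4$ resists this method: there $G'$ has odd order $m=n/2$, every nontrivial element of $G'$ has odd order while every element outside $G'$ has even order, so no pair $(h,k)$ as in Theorem~\ref{nonBI} exists, and the direct-product criterion of Theorem~\ref{di} fails for the same parity reason.

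The gap is that for $n\equiv 2\pmod 4$ you never produce the required pair $(S,T)$. What you give is a programme --- decompose $G\cong D_{2m}\times C_{2}$, write $S=(A\times\{1\})\cup(B\times\{z\})$, twist one layer by a non-automorphic graph isomorphism, check that the eigenvalue at the character $r\mapsto-1$, $s\mapsto 1$ actually moves --- together with an explicit admission that the decisive step (guaranteeing both that the twist is a graph isomorphism and that it changes $M_{1}$) is the expected obstacle, to be settled later by a machine search in $D_{12}$ and then ``lifted uniformly.'' No candidate $A$, $B$, or twisting map is exhibited, no verification that $M_{1}^{S}\neq M_{1}^{T}$ is carried out even for $D_{12}$, and the uniform lift to all $n\equiv 2\pmod 4$ is not described at all. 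Since the statement claims non-BI for \emph{every} even $n$, the residue class $n\equiv 2\pmod 4$ (which includes $D_{12}$, $D_{20}$, $D_{28}$ in Table~\ref{table:3}) is exactly where the content lies, and it remains unproved. Note also that the paper itself does not prove this theorem but imports it from \cite[Proposition 3.5]{AM}, so there is no in-paper argument your sketch could be matched against; as it stands, your text is a correct proof of the sub-statement ``$D_{2n}$ is not a BI-group when $4\mid n$'' plus a research plan for the remaining case.
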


\begin{prop}\label{2-groups}
All non-abelian groups of order $16$ are not BI-groups.
\end{prop}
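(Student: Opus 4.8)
The plan is to apply Theorem~\ref{nonBI} to each of the nine non-abelian groups of order $16$. For this it suffices, in every such group $G$, to produce an element $h\in G\setminus G'$ and an element $k\in G'$ with $o(h)=o(k)$ and $\lambda(h)=\lambda(h^{-1})$ for all linear characters $\lambda$ of $G$. I would first record that this last condition is equivalent to $h^2\in G'$: indeed $\lambda(h)=\lambda(h^{-1})$ forces $\lambda(h)^2=1$, and since $G'=\bigcap\{\ker\lambda\mid \lambda\in Irr(G),\ \lambda(1)=1\}$, requiring $\lambda(h^2)=1$ for every linear $\lambda$ is the same as $h^2\in G'$. Thus the task reduces to finding $h\notin G'$ with $h^2\in G'$ together with some $k\in G'$ of the same order as $h$.

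The second step is a structural fact about these nine groups: $G'$ is always cyclic, of order $4$ for the three groups of maximal class $D_{16}$, $SD_{16}$, $Q_{16}$, and of order $2$ for the remaining six groups (this is immediate from the classification of groups of order $16$, or can be read off group-by-group; note that $|G'|\in\{2,4\}$ already because $G$ is non-abelian while $G/G'$ is abelian and non-cyclic). In particular $G'$ always contains a \emph{unique} involution $z$.

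Now the main case split. If $G$ has an involution $h$ lying outside $G'$, then taking $k=z$ gives $o(h)=o(k)=2$ and $h^2=1\in G'$, and Theorem~\ref{nonBI} applies at once. I claim this covers every group except $Q_{16}$: if, on the contrary, every involution of $G$ lay in $G'$, then, since $G'$ is cyclic and so has the single involution $z$, the group $G$ would have a unique involution and would therefore be cyclic or generalized quaternion; among non-abelian groups of order $16$ this forces $G\cong Q_{16}$. For $Q_{16}=\langle r,s\mid r^8=1,\ s^2=r^4,\ s^{-1}rs=r^{-1}\rangle$ the involution approach genuinely fails, since the only involution $r^4$ lies in $G'=\langle r^2\rangle$; here I would instead take $h=s$ and $k=r^2$, for which $o(h)=o(k)=4$, $h\notin\langle r\rangle\supseteq G'$, and $h^2=s^2=r^4\in G'$, so that Theorem~\ref{nonBI} again applies. (The dihedral group $D_{16}$ is in any case covered directly by Theorem~\ref{dihedral}.)

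The routine part is only to exhibit, for the six groups with $G'\cong C_2$ and for $D_{16},SD_{16}$, an explicit involution outside $G'$; the unique-involution dichotomy guarantees one abstractly, but it can also be written down from standard presentations (for instance a reflection in $D_{16}$ and $SD_{16}$, the outer involution $b$ in $C_8\rtimes C_2$, the element $b^2$ in $C_4\rtimes C_4$, or $(1,z)$ in $Q_8\times C_2$). The main obstacle is the single group $Q_{16}$: there no involution lies outside the derived subgroup, so one is forced to pass to elements of order $4$, and the point that makes Theorem~\ref{nonBI} still applicable is precisely that $s$ has order $4$ with $s^2\in G'$ while remaining outside $G'$.
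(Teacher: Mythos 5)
Your proof is correct, but it takes a genuinely different route from the paper. The paper disposes of Proposition~\ref{2-groups} in one line by citing Theorems~\ref{di}, \ref{sem} and \ref{dihedral}, i.e.\ by decomposing each group of order $16$ as a direct or semidirect product $H\ltimes K$ with $K$ non-abelian (or invoking the dihedral result for $D_{16}$). You instead go back to the common source of all three of those results, Theorem~\ref{nonBI}, and apply it uniformly: your observation that the hypothesis $\lambda(h)=\lambda(h^{-1})$ for all linear $\lambda$ is exactly $h^2\in G'$, combined with the fact that $G'$ is cyclic for every non-abelian group of order $16$ and the unique-involution dichotomy (cyclic or generalized quaternion), reduces everything to the single exceptional case $Q_{16}$, which you settle with $h=s$, $k=r^2$ of order $4$. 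What your approach buys is not just uniformity: it actually covers cases the paper's cited theorems do not literally reach. $Q_{16}$ has a unique involution, hence admits no non-trivial semidirect (or direct) product decomposition at all, and the modular group $C_8\rtimes C_2$ of order $16$ has only abelian maximal subgroups, so neither group satisfies the hypotheses of Theorem~\ref{di} or Theorem~\ref{sem} (which require a non-abelian factor $K$), and neither is dihedral. Your direct appeal to Theorem~\ref{nonBI} handles both, so your argument is not merely an alternative but a more complete justification of the proposition than the one the paper records.
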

\begin{proof}
It follows from to Theorems \ref{di}, \ref{sem} and \ref{dihedral}.
\end{proof}

\begin{proof}[Proof of Theorem \ref{BI-groups}]
The proof follows from  the results which are mentioned in  Table \ref{table:3}.
\end{proof}

\small{
\begin{table}
\centering
\caption{\label{table:3}\small{Non-abelian CI and BI groups of order at most $30$. A reference is given for each item to establish if the group is CI or BI or not. Y. means Yes; N. means No.  The reference \cite{GAP} shows that we have used the above code written in GAP to determine the group is BI or not. }}

\begin{tabular}{|c|c|c|c||c|c|c|c|}
\hline
ID&Group&CI&BI&ID&Group&CI&BI\\
\hline
[6,1]&$S_3$&Y.\cite{B2}&Y.\cite{AM}&
[8,3]&$D_8$&N.\cite{Li2}&N.\ref{dihedral}\\\hline
[8,4]&$Q_8$&Y.\cite{Li2}&Y.\cite{AM}&
[10,1]&$D_{10}$&Y.\cite{B2}&Y.\cite{AM}\\\hline
[12,1]&$C_4\ltimes C_3$&Y.\cite{Roy}&Y.\cite{AM}&
[12,3]&$A_4$&Y.\cite{Roy}&Y.\ref{BICI}\\\hline
[12,4]&$D_{12}$&N.\cite{Roy}&N.\ref{dihedral}&
[14,1]&$D_{14}$&Y.\cite{B2}&Y.\cite{AM}\\\hline
[16,-]&\multicolumn{4}{l}
\mbox{All non-abelian groups of order 16}&&N. \ref{2-groups}&N. \ref{2-groups}\\\hline
[18,1]&$D_{18}$&Y.\cite{Roy}&Y.\ref{BICI}&
[18,3]&$C_3\times S_3$&N.\cite{Roy}&N.\ref{di}\\\hline
[18,4]&$C_2\ltimes (C_3\times C_3)$&Y.\cite{Roy}&Y.\ref{BICI}&
[20,1]&$C_4\ltimes C_5$&Y.\cite{LLP}&Y.\cite{AM}\\\hline
[20,3]&$C_4\ltimes C_5$&N.\cite{LLP}&Y.\ref{F5}&
[20,4]&$D_{20}$&N.\cite{Roy}&N.\ref{dihedral}\\\hline
[21,1]&$C_3\ltimes C_7$&Y.\cite{Dobson}&Y.\ref{BICI}&
[22,1]&$D_{22}$&Y.\cite{B2}&Y.\cite{AM}\\\hline
[24,1]&$C_8\ltimes C_3$&Y.\cite{Roy}&Y.\ref{BICI}&
[24,3]&$SL(2,3)$&N.\cite{Conder-Li}&N.\ref{SL}\\\hline
[24,4]&$C_8\ltimes C_3$&N.\cite{Conder-Li}&N.\cite{GAP}&
[24,5]&$C_4\times S_3$&N.\cite{Conder-Li}&N.\ref{sem}\\\hline
[24,6]&$D_{24}$&N.\cite{Roy}&N.\ref{dihedral}&
[24,7]&$C_2\times (C_4\ltimes C_3)$&N.\ref{Sylow}&N.\cite{GAP}\\\hline
[24,8]&$C_2\ltimes (C_2 \times C_6)$&N.\ref{Sylow}&N.\cite{GAP}&
[24,10]&$C_3\times D_4$&N.\ref{Sylow}&N.\cite{GAP}\\\hline
[24,11]&$C_3\times Q_8$&N.\cite{Spiga}&N.\cite{GAP}&
[24,12]&$S_4$&N.\ref{Sylow}&N.\cite{AM}\\\hline
[24,13]&$C_2\times A_4$&N.\ref{di}&N.\ref{di}&
[24,14]&$C_2\times C_2 \times S_3$&N.\ref{di}&N.\ref{di}\\\hline
[26,1]&$D_{26}$&Y.\cite{B2}&Y.\cite{AM}&
[27,3]&$C_3\ltimes (C_3\times C_3)$&N.\cite{AM}&N.\ref{sem}\\\hline
[27,4]&$C_3\ltimes C_9$&N.\cite{AM}&N.\ref{sem}&
[28,1]&$C_4\ltimes C_7$&Y.\cite{LLP}&Y.\cite{AM}\\\hline
[28,3]&$D_{28}$&N.\cite{Roy}&N.\ref{dihedral}&
[30,1]&$C_5\times S_3$&Y.\cite{Roy}&Y.\ref{BICI}\\\hline
[30,2]&$C_3\times D_{10}$&Y.\cite{DobMorSpi}&Y.\ref{BICI}&
[30,3]&$D_{30}$&Y.\cite{Roy}&Y.\ref{BICI}\\\hline
\end{tabular}\\
\end{table}
}
\section*{\bf Acknowledgements}
The research of the first author was in part supported by a grant (No. 96050219) from School of Mathematics, Institute
for Research in Fundamental Sciences (IPM).

\end{document}